\theoremstyle{definition}
\newtheorem{thm}{Theorem}[section]
\newtheorem{cor}[thm]{Corollary}
\newtheorem{lem}[thm]{Lemma}
\newtheorem{rem}[thm]{Remark}
\newtheorem{prop}[thm]{Proposition}
\newtheorem{defn}[thm]{Definition}
\newtheorem{example}[thm]{Example}
\numberwithin{equation}{section}
\def\N{{\mathbbm N}}
\def\Z{{\mathbbm Z}}
\def\1{{\mathbbm{1}}}
\newcommand{\Hom}{{\rm Hom}}
\def\lra{{\longrightarrow}}
\def\udmod{{\mbox{-}\mathrm{\underline{mod}}}}  
\def\Ext{{\mathrm{Ext}}}
\def\Id{\mathrm{Id}}
\def\mc{\mathcal}
\def\shuffle{\,\raise 1pt\hbox{$\scriptscriptstyle\cup{\mskip
               -4mu}\cup$}\,}
\newcommand{\refequal}[1]{\xy {\ar@{=}^{#1}
(-1,0)*{};(1,0)*{}};
\endxy}
\newcommand{\mH}{\mathrm{H}} 
\newcommand{\dmod}{\mbox{-}\mathrm{mod}}
\title{A Rickard equivalence for hopfological homotopy categories}
\author{You Qi}
\date{\today}
\begin{document}
%

\maketitle

\begin{abstract}
In his paper \cite{Ri}, Rickard presents the stable module category of a self-injective algebra as a Verdier quotient of its derived category by perfect complexes. We present a similar realization of the homotopy category in hopfological algebra as such a Verdier quotient.
\end{abstract}

\setcounter{tocdepth}{2} 


\section{Introduction}
Hopfological algebra, in the sense of \cite{Hopforoots, QYHopf}, was introduced as a generalization of the usual homological algebra. See \cite{QiSussan} for a survey of some recent applications.

Yet one may wonder whether hopfological algebra can be seen more directly through the lens of the usual homological algebra. For instance, one may ask if the construction of homotopy, derived categories in hopfological algebra can be recovered from the usual homotopy and derived categories of (exact) module categories. In this note, we provide a positive answer towards this connection.

More explicitly, let $H$ be a finite-dimensional Hopf algebra, and $B$ be a right $H$-comodule algebra. The abelian category of $B$-modules, denoted $B\dmod$, affords a categorical action by $H\dmod$ via the $H$-comodule algebra structure of $B$. For such a $B$, Khovanov defined in \cite{Hopforoots} the \emph{hopfological homotopy category} $\mc{C}(B,H)$. This background information and some basic facts are presented in Section \ref{sec-basic-def}. 

Our first main goal, established in Section \ref{sec-Frobenius-exact}, is to fit Khovanov's construction into more traditional homological algebra framework. In Lemma \ref{lem-exact-structure}, we show that the category $B\dmod$ is endowed with an \emph{exact structure} $\mc{E}$ in the sense of Quillen, which usually contains ``fewer'' short exact sequences than those in $B\dmod$. Furthermore, this exact category structure is \emph{Frobenius}, and thus allows one to reconstruct (Theorem \ref{thm-reconstruction}) $\mc{C}(B,H)$ as the associated \emph{stable category} in the sense of Keller \cite[Section 2.2]{Ke1}. 

In Section \ref{sec-Rickard}, we give another construction  of $\mc{C}(B,H)$ in the spirit of Rickard's Theorem \cite{Ri} for self-injective algebras. Denote by $\mc{D}^b(B,\mc{E})$ the usual bounded derived category of the exact category $B\dmod,\mc{E})$. Similar as for the abelian category structure on $B\dmod$, the new Frobenius exact structure $\mc{E}$ on $B\dmod$ gives rise to the notion of \emph{$\mc{E}$-perfect complexes} consisting of bounded complexes of $\mc{E}$-projective-injective modules. The class of $\mc{E}$-perfect complexes, denoted $\mc{P}_\mc{E}$, is thick. This allows us to show that there is a triangulated equivalence (Theorem \ref{thm-Rickard})
\[
\mc{C}(B,H)\cong \dfrac{\mc{D}^b(B,\mc{E})}{\mc{P}_{\mc{E}}},
\]
where the right hand side stands for the Verdier quotient of triangulated categories. In particular, when $B=H$, this equivalence agrees with the original theorem of Rickard.
This type of Rickard equivalence adapts to more general Frobenius exact categories and is first proven by Keller-Vossieck \cite{KeVo}. We record another proof here, following the arguments as in \cite{Orlov}.

This work, in part, is motivated by the author's longer term goal to study algebro-topological invariants, such as algebraic K-theory, Hochschild and cyclic homology, in the context of hopfological algebra. Some other approaches to this problem are also suggested in the recent work of Ohara and Tamaki \cite{OhTa}.


\section{Background} \label{sec-basic-def}

\paragraph{Notation.} In this paper, we will let $\Bbbk$ denote a fixed ground field once and for all. Unadorned tensor product ``$\otimes$'' stands for tensor product over $\Bbbk$, and likewise $\Hom$ stands for the space of $\Bbbk$-linear homomorphisms.

Let $H$ be a finite-dimensional Hopf (super-)algebra over $\Bbbk$, whose multiplication is denoted $\Delta$, antipode is denoted $S$ and counit is denoted $\epsilon$. We will usually adopt Sweedler's notation such that the comultiplicaiton on $H$ is written as
\begin{equation}
    \Delta (h) = \sum_h h_1\otimes h_2,
\end{equation}
for any $h\in H$.
Then $H$ is a Frobenius (super-)algebra \cite{LaSw}. It has an element $\Lambda\in H$ called the \emph{left integral}, which is unique up to scaling, satisfying, for any $h\in H$, \begin{equation}
    h \Lambda = \epsilon (h) \Lambda.
\end{equation}

The left integral defines, for any $H$-module $M$, a \emph{canonical $H$-module embedding}
\begin{equation} \label{eqn-canonical-inj}
    \lambda_M: M \lra  M \otimes H , \quad \quad m\mapsto  m \otimes \Lambda.
\end{equation}

We will also use the canonical surjections
\begin{equation} \label{eqn-canonical-surj}
  \rho_M:  M\otimes H \lra M, \quad \quad m\otimes h \mapsto \epsilon(h)m.
\end{equation}

A module of the form $M\otimes H$ is a projective-injective $H$-module, thanks to the following result. Given any $H$-module $M$, let $M_0$ stand for the underlying vector space of $M$ equipped with the trivial $H$-action, i.e., for any $m\in M_0$ and $h\in H$, $hm=\epsilon(h)m$. 

\begin{lem}\label{lem-H-tensor-M}
 For any $M\in H\dmod$, there are isomorphisms of $H$-modules
\begin{equation}
    M\otimes H \cong M_0\otimes H, \quad \quad H\otimes M\cong H\otimes M_0.
\end{equation}
\end{lem}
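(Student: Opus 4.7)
The plan is to construct explicit, mutually inverse $H$-module maps between $M\otimes H$ and $M_0\otimes H$, using the antipode $S$ of $H$ to ``untwist'' the action on the $M$-factor. Recall that a tensor product of two $H$-modules carries the diagonal action $g\cdot(m\otimes x) = \sum_g g_1 m\otimes g_2 x$ (with appropriate Koszul signs in the super case). On $M_0\otimes H$, since the first factor is trivial, the diagonal action collapses by the counit axiom to $g\cdot(m\otimes x) = m\otimes gx$.

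Concretely, I would define
\[
\phi\maps M\otimes H \lra M_0\otimes H, \qquad \phi(m\otimes h) := \sum_h S(h_1)\,m\otimes h_2,
\]
with candidate inverse
\[
\psi\maps M_0\otimes H \lra M\otimes H, \qquad \psi(m\otimes h) := \sum_h h_1\, m\otimes h_2.
\]
One then verifies, via short Sweedler-notation calculations, that $\phi$ is $H$-equivariant and that $\phi\circ\psi=\id$ and $\psi\circ\phi=\id$. Each of these reduces, after applying coassociativity $\sum g_1\otimes g_{21}\otimes g_{22} = \sum g_{11}\otimes g_{12}\otimes g_2$ to bring an antipode-counit pair together, to the antipode axiom $\sum S(h_1) h_2 = \epsilon(h)\,1$ followed by the counit axiom $\sum\epsilon(h_1) h_2 = h$; the $H$-linearity check for $\phi$ additionally uses that $S$ is an anti-algebra homomorphism, so that $S((g_2 h)_1) = S(h_1) S(g_{21})$.

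The second isomorphism $H\otimes M \cong H\otimes M_0$ is handled symmetrically, via $h\otimes m \mapsto \sum_h h_1 \otimes S(h_2)\, m$ with inverse $h\otimes m \mapsto \sum_h h_1 \otimes h_2\, m$; the verification is parallel. The only genuine subtlety, and the main place for potential obstacles, is the bookkeeping of Koszul signs in the super case: one must write everything in homogeneous components and carefully track the $(-1)^{|g_2||m|}$ factors introduced whenever tensor positions are ``crossed'' by the diagonal action. These signs balance out exactly as in the ungraded setting, so the argument is not conceptually harder in the super case, but the sign tracking is the most labour-intensive step.
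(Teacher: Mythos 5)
Your overall strategy is exactly the paper's: exhibit explicit mutually inverse untwisting maps built from the antipode and verify equivariance and invertibility by Sweedler calculus. The second isomorphism $H\otimes M\cong H\otimes M_0$ is correct as you state it and coincides with the paper's maps. But your first map contains a genuine error: for $M\otimes H\lra M_0\otimes H$ you must use $S^{-1}$, not $S$, i.e.\ the correct formula is $\phi(m\otimes h)=\sum_h S^{-1}(h_1)m\otimes h_2$ (the paper's choice), while $\psi(m\otimes h)=\sum_h h_1m\otimes h_2$ is fine.

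The point is that your claimed reduction ``to the antipode axiom $\sum S(h_1)h_2=\epsilon(h)1$'' does not actually occur for this side. Computing $\phi\circ\psi(m\otimes h)$ with your $\phi$ gives $\sum_h S(h_2)h_1m\otimes h_3$, and checking $H$-linearity of your $\phi$ leads to needing $\sum_g S(g_2)g_1\otimes g_3=1\otimes g$: in both cases the antipode and the comultiplication leg appear in the \emph{opposite} order, $\sum S(h_2)h_1$ rather than $\sum S(h_1)h_2$. The identity $\sum S(h_2)h_1=\epsilon(h)1$ is the defining property of the antipode of $H^{\mathrm{cop}}$, which is $S^{-1}$, and it holds for $S$ itself only when $S^2=\id$ (e.g.\ cocommutative or involutive $H$); for a Taft algebra your $\phi$ is neither $H$-linear nor inverse to $\psi$. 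Since $H$ is finite dimensional, $S$ is bijective, so replacing $S$ by $S^{-1}$ repairs the argument completely — the asymmetry between the two isomorphisms ($S^{-1}$ when $H$ sits on the right, $S$ when it sits on the left) is forced by which order the products come out in. Your remarks on Koszul signs in the super case are reasonable but do not affect this issue.
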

\begin{proof}
It is an easy exercise to check that
\begin{subequations}
\begin{equation}
    \phi: M\otimes H \lra M_0\otimes H, \quad \quad m\otimes h \mapsto \sum_h S^{-1}(h_1)m\otimes h_2, 
\end{equation}
is a left $H$-module map, whose inverse is given by
\begin{equation}
    \psi: M_0\otimes H \lra M\otimes H, \quad \quad m\otimes h \mapsto \sum_h h_1m \otimes h_2 .
\end{equation}
\end{subequations}
Likewise, the map
\begin{subequations}
\begin{equation}
    \phi^\prime : H\otimes M \lra H\otimes M_0, \quad \quad h\otimes m \mapsto \sum_h h_1\otimes S(h_2)m, 
\end{equation}
and its inverse
\begin{equation}
    \psi^\prime: H\otimes M_0 \lra H\otimes M, \quad \quad h\otimes m \mapsto \sum_h h_1 \otimes h_2m ,
\end{equation}
\end{subequations}
set up the desired isomorphism of left $H$-modules.
\end{proof}

\paragraph{Stable category of Hopf algebras.}
The Frobenius algebra structure on $H$ means that the class of injective $H$-modules coincide with that of projectives. It in turn allows one to form a categorical quotient of $H$-modules by the projective-injective modules.

\begin{defn}
The stable module category $H\udmod$ has the same objects of $H\dmod$, while the morphism space between two objects $U,V$ is given by
\begin{equation}
    \Hom_{H\udmod}(U,V)=\dfrac{\Hom_{H\dmod}(U,V)}{
    \left\{
    f:U\lra V\big|
   {\textrm{ $f$ factors through a } \atop
   \textrm{ projective-injective $H$-module}}
    \right\}
    }
\end{equation}
\end{defn}

\begin{prop}\label{prop-H-udmod-triangulated}
The category $H\udmod$ is triangulated. 
\end{prop}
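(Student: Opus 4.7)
The strategy is to exhibit $H\dmod$ as a \emph{Frobenius abelian category} in the sense of Happel, and then invoke the classical theorem that the associated stable category is triangulated. Since $H$ is a finite-dimensional Hopf algebra, hence Frobenius, a module is projective if and only if it is injective. By Lemma \ref{lem-H-tensor-M}, the module $M \otimes H$ is projective-injective for every $M \in H\dmod$. The canonical embedding $\lambda_M$ of \eqref{eqn-canonical-inj} then gives enough injectives, and the canonical surjection $\rho_M$ of \eqref{eqn-canonical-surj} (which is surjective because $\epsilon : H \to \Bbbk$ is nonzero) gives enough projectives. This exhibits $H\dmod$ as a Frobenius abelian category.

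Next I would describe the triangulated structure on $H\udmod$ explicitly. The suspension is defined on objects by
\begin{equation*}
\Sigma M := (M \otimes H) / \lambda_M(M),
\end{equation*}
and a standard argument using the injectivity of $M \otimes H$ shows that any other embedding of $M$ into a projective-injective produces a canonically isomorphic object in $H\udmod$. This makes $\Sigma$ an autoequivalence, with quasi-inverse $\Omega$ built dually from $\rho_M$. Distinguished triangles are declared to be those isomorphic to triangles of the form
\begin{equation*}
A \xrightarrow{f} B \xrightarrow{g} C_f \xrightarrow{h} \Sigma A,
\end{equation*}
where $C_f$ is the pushout of $\lambda_A : A \to A \otimes H$ along $f : A \to B$, and $h$ is induced from the pushout's universal property together with the quotient map $A \otimes H \twoheadrightarrow \Sigma A$.

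It then remains to verify the axioms (TR1)--(TR4). The rotation axiom and the completion of any morphism to a triangle follow formally from the pushout construction together with the vanishing in $H\udmod$ of any map factoring through a projective-injective. The octahedral axiom is the only nontrivial verification, and I expect it to be the main technical obstacle; however, the standard diagrammatic argument for Frobenius abelian categories (see \cite{Ha}, or \cite[Section 2.2]{Ke1} for a more streamlined treatment) goes through verbatim, since it relies only on the Frobenius data established in the first step above. The proposition follows.
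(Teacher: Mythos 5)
Your proposal is correct and takes essentially the same route as the paper: the paper's proof is simply a citation to Happel's theorem that the stable category of a Frobenius (abelian/exact) category is triangulated, and the explicit description you give of the suspension via $\lambda_M$ and of standard triangles via pushout is exactly the description the paper records immediately after the proposition in diagram \eqref{eqn-triangle-in-H-udmod}. Your added detail on why $H\dmod$ is Frobenius (projectives coincide with injectives, with $\lambda_M$ and $\rho_M$ supplying enough of each) is a harmless elaboration of what the paper leaves implicit.
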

\begin{proof}
See \cite[Chapter 1]{Hap88}.
\end{proof}

The tensor product $\otimes$ on $H\dmod$ descends onto $H\udmod$ to be an exact bi-functor, which will still be denoted by $\otimes$.

Let us describe, for the sake of completeness, how the triangulated structure is defined on $H\udmod$.
For any $U\in H\udmod$, choose an injective embedding of $U$ in $H\dmod$ 
\begin{equation}
    0 \lra U \stackrel{\iota}{\lra} I_U
\end{equation}
(e.g., take $\iota=\lambda_U$ above) and declare $U[1]:=\mathrm{Coker}(\iota)$.  Likewise, choose a projective covering map
\begin{equation}
    P_U \stackrel{\pi}{\lra} U \lra 0
\end{equation}
(e.g., take $\pi=\epsilon \otimes \Id_U: H\otimes U\lra U$), and declare $U[-1]:=\mathrm{Ker}(\pi)$. The objects $U[1]$, $U[-1]$ are well-defined up to isomorphism and functorial in $U$ in $H\udmod$. The  endofunctors $[1]$, $[-1]$ on $H\udmod$ are inverse of each other.

If $f: U\lra V$ is a map of $H$-modules, then they fit into a commutative diagram
\begin{equation}\label{eqn-triangle-in-H-udmod}
    \begin{gathered}
    \xymatrix{
    0 \ar[r] & U \ar[r]^{\iota} \ar[d]^f & I_U \ar[d] \ar[r] & U[1] \ar[r] \ar@{=}[d]& 0 \\
    0 \ar[r] &  V \ar[r]^g &  C_f \ar[r]^\jmath & U[1] \ar[r] & 0
    }
    \end{gathered} \ ,
\end{equation}
where the left most square is a push-out. The sextuple 
\begin{equation}
    U \stackrel{f}{\lra} V \stackrel{g}{\lra} C_f \stackrel{\jmath}{\lra} U[1]
\end{equation}
is called a \emph{standard distinguished triangle} in $H\udmod$. Any triangle isomorphic to a standard distinguished triangle will be declared a \emph{distinguished triangle}. As part of proof of Proposition \ref{prop-H-udmod-triangulated}, Happel  shows that the class of distinguished triangles satisfies the axioms of triangulated categories.

\paragraph{Hopfological homotopy categories.} \label{sec-hopfo-homotopy}
Let $B$ be a right $H$-comodule algebra. In other words, $B$ is a unital $\Bbbk$-algebra equipped with a map
\begin{equation}
    \Delta_B: B\lra  B \otimes H,
\end{equation}
satisfying
\begin{subequations}
\begin{align}
( \mathrm{Id}_B \otimes \epsilon )\Delta_B = \Id_B, & \quad \quad 
(\Id_B \otimes \Delta )\Delta_B = (\Delta_B \otimes \Id_H)\Delta_B \label{eqn-H-comod-alg-1}
\\
\Delta_B(1)=1_B\otimes 1_H, & \quad \quad \Delta_B(xy)=\Delta_B(x)\Delta_B(y). \label{eqn-H-comod-alg-2}
\end{align}
\end{subequations}
We adapt Sweedler's notation to denote
\begin{equation}
    \Delta_B(b)=\sum_b b_1\otimes b_2
\end{equation}
with it understood that the first components $b_1$'s lie in $B$ and the second $b_2$'s lie in $H$. 

If $U$ is an $H$-module and $M$ is a $B$-module, then $ M\otimes U $ is equipped with a $B$-module structure via $\Delta_B$: for any $b\in B$ and $x \otimes u\in M \otimes U$, 
\begin{equation}
    b\cdot (x\otimes u)= \sum_b (b_1x)\otimes (b_2 u).
\end{equation}
In this way, $B\dmod$ carries a right categorical action by $H\dmod$ given by
\begin{equation}\label{eqn-abelian-H-action}
 B\dmod   \times H\dmod  \lra B\dmod, \quad \quad (M, U)\mapsto M\otimes U.
\end{equation}

\begin{example} Throughout, we will keep in mind the following examples.
\begin{enumerate}
    \item[(1)] Take $B=H$, then $H$ is automatically a comodule algebra over $H$ by taking $\Delta_B= \Delta$, the Hopf algebra comultiplication.
    \item[(2)] Let $A$ be a unital \emph{left $H$-module algebra}, i.e., there is an action map
    \begin{equation}
     \cdot:   H \times A \lra  A, \quad (h,x)\mapsto h\cdot x,
    \end{equation}
    making $A$ into a left $H$-module, such that the following compatibility conditions hold:
    \begin{itemize}
        \item[(i)] For any $x, y \in A$ and $h\in H$, $h\cdot (xy) =\sum_h (h_1\cdot x) (h_2\cdot y)$.
        \item[(ii)] $h\cdot 1_A= \epsilon(h) 1_A$.
    \end{itemize}
    In this case, we can form the \emph{smash product ring} $B:=A \# H$, which is isomorphic to $A\otimes H$ as a vector space, subject to the multiplication rule
    \begin{equation}
        (x\otimes h) (y\otimes k) =\sum_h x(h_1\cdot y) \otimes h_2k,
    \end{equation}
    for any $k,h\in H$ and $x,y\in A$. Set $\Delta_B:=  \Id_A \otimes \Delta$. It is easy to see that $B$ is a right $H$-comodule algebra. 
\end{enumerate}
Note that the first example is a special case of the second by taking $A=\Bbbk$.
\end{example}

\begin{defn}
\begin{enumerate}
    \item[(1)] A morphism $f: M_1\lra M_2$ in $B\dmod$ is called \emph{null-homotopic} if it factors through a $B$-module of the form $ M \otimes H$.  Denote the class of modules whose identity morphism factors through an object of the form $ M \otimes H $ by $\mc{N}_H$
    \item[(2)] The category $\mc{C}(B,H)$ is the quotient of the abelian category $B\dmod$ by the ideal of null-homotopic morphisms. Thus, $\mc{C}(B,H)=B\dmod/\mc{N}_H$.
\end{enumerate}
\end{defn}

The abelian categorical action in \eqref{eqn-abelian-H-action} descends onto a triangulated action.

\begin{prop}\label{prop-H-udmod-action}
The category $\mc{C}(B,H)$ is a triangulated module category over $H\udmod$. The action of $H\udmod$ by exact functors on $\mc{C}(B,H)$ is given by
\begin{equation*}
\mc{C}(B,H) \times H\udmod  \lra \mc{C}(B,H), \quad \quad    (M, U)\mapsto M\otimes U.
\end{equation*}
\end{prop}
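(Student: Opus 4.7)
The strategy is to equip $\mc{C}(B,H)$ with a triangulated structure via the same Happel-style procedure used for $H\udmod$ in Proposition~\ref{prop-H-udmod-triangulated}, and then check that the abelian bifunctor in \eqref{eqn-abelian-H-action} descends to a triangulated bifunctor on $\mc{C}(B,H) \times H\udmod$. For the triangulated structure I would set $M[1] := \mathrm{coker}(\lambda_M)$ using the canonical $B$-module embedding $\lambda_M\colon M\lra M\otimes H$ from \eqref{eqn-canonical-inj} (which is a $B$-module map because of $(\Id_B\otimes \epsilon)\Delta_B=\Id_B$) and land it in a null-homotopic target; the inverse shift and the standard distinguished triangles are defined analogously to \eqref{eqn-triangle-in-H-udmod}. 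The remaining content of the proposition then reduces to three checks: (i) the bifunctor descends to the quotients on each side, (ii) it commutes with the shift functors in each variable, and (iii) it preserves standard distinguished triangles.

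The heart of the argument is step (i). Suppose $f\colon M_1\lra M_2$ in $B\dmod$ factors through a module $N\otimes H$; then $f\otimes \Id_U$ factors through $(N\otimes H)\otimes U \cong N\otimes (H\otimes U)$. Applying Lemma~\ref{lem-H-tensor-M} to $H\otimes U$ yields an $H$-module isomorphism $H\otimes U\cong H\otimes U_0$, and tensoring with $N$ on the left produces a $B$-module isomorphism $N\otimes (H\otimes U)\cong N\otimes (H\otimes U_0)$. Because the $H$-action on $U_0$ is trivial, a direct unwinding of the $B$-action via $\Delta_B$ identifies the latter with $(N\otimes U_0)\otimes H$, where $N\otimes U_0$ is regarded as a $B$-module with $B$ acting on the first tensor factor only. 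Hence $(N\otimes H)\otimes U$ is again of the form $M'\otimes H$, so $f\otimes \Id_U$ is null-homotopic in $B\dmod$. Symmetrically, the other half of Lemma~\ref{lem-H-tensor-M} shows that $\Id_M\otimes g$ is null-homotopic in $B\dmod$ whenever $g\colon U_1\lra U_2$ factors through a projective-injective $H$-module; thus the bifunctor descends.

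For step (ii), $(M\otimes U)[1]$ can be computed using any null-homotopic injective envelope of $M\otimes U$; by the previous paragraph, both $\lambda_M\otimes \Id_U\colon M\otimes U\lra (M\otimes H)\otimes U$ and $\Id_M\otimes \lambda_U\colon M\otimes U\lra M\otimes (U\otimes H)$ qualify, and since tensoring over $\Bbbk$ is exact, taking cokernels produces canonical isomorphisms $(M\otimes U)[1]\cong M[1]\otimes U\cong M\otimes U[1]$ in $\mc{C}(B,H)$. Step (iii) then follows because the push-out diagram \eqref{eqn-triangle-in-H-udmod} defining a standard distinguished triangle remains a push-out after tensoring by a fixed $U$ (resp.\ $M$), by exactness of $-\otimes_\Bbbk -$, and the identifications from step (ii) match the result with a standard distinguished triangle of the tensored object. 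The main obstacle is step (i): one must carefully track the $B$-action on nested tensor products and invoke Lemma~\ref{lem-H-tensor-M} in the right tensor slot to rearrange a module of the form $(N\otimes H)\otimes U$ into one of the form $N'\otimes H$; once this descent is in hand, all the remaining compatibilities are formal.
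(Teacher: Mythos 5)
The paper itself offers no argument here---it simply cites \cite[Theorem 1]{Hopforoots}---so your proposal is best measured against that source and against the paper's own Section \ref{sec-Frobenius-exact}. Your step (i) is correct and is exactly the key computation: using Lemma \ref{lem-H-tensor-M} in the middle tensor slot to rewrite $(N\otimes H)\otimes U\cong (N\otimes U_0)\otimes H$ (and, for the other variable, to see that $M\otimes P$ lies in $\mc{N}_H$ for $P$ a projective $H$-module). This is the same rearrangement the paper later uses to show $\mc{E}$ is stable under tensoring with $H\dmod$, so the descent of the bifunctor is in good shape.

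The genuine gap is in your opening move: you propose to get the triangulated structure on $\mc{C}(B,H)$ ``via the same Happel-style procedure used for $H\udmod$,'' but Happel's theorem applies to the stable category of a \emph{Frobenius} category, and $B\dmod$ with its abelian exact structure is not Frobenius in general---projective and injective $B$-modules need not coincide, and $M\otimes H$ need not be injective or projective over $B$. To make the Happel-style argument legitimate you must first introduce the exotic exact structure $\mc{E}$ (conflations $=$ short exact sequences that split after $-\otimes H$) and prove it is Frobenius with projective-injectives $\mc{N}_H$; this is precisely the content of Section \ref{sec-Frobenius-exact} of the paper (Lemma \ref{lem-exact-structure} and Theorem \ref{thm-reconstruction}), and without it the verification of the triangulated axioms (rotation, octahedron) is simply missing. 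The same issue infects your step (ii): the shift $(M\otimes U)[1]$ is \emph{not} computable from an arbitrary monomorphism into an object of $\mc{N}_H$; one needs the embedding to be an inflation in $\mc{E}$, i.e., to split after tensoring with $H$. Both $\lambda_M\otimes\Id_U$ and $\Id_M\otimes\lambda_U$ do satisfy this (the first because $\Lambda\otimes\Id_H$ splits and $U\otimes H$ is projective, the second because $\lambda_U\otimes\Id_H$ is already split over $H$), so your conclusion stands, but the justification as written is not. Once the Frobenius exact structure is in place, your steps (ii) and (iii) go through by the standard uniqueness-of-cone arguments, and the whole proposal becomes a correct reconstruction of the cited proof.
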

\begin{proof}
See \cite[Theorem 1]{Hopforoots}.
\end{proof}

When $B=H$, the category $\mc{C}(B,H)$ coincides with $H\udmod$. Thus this construction is a generalization of the stable module category of $H$.

Similar as for $H\udmod$, the triangulated structure on $\mc{C}(B,H)$ is described as follows. For any $B$-module $M$, define $M[1]:=\mathrm{Coker}(\lambda_M)$, and set $M[-1]$ to be the kernel of the map $\Id_M\otimes \epsilon: M\otimes H \lra M$. Distinguished triangles in $\mc{C}(B,H)$ are constructed as follows. Given a map of $B$-modules $f: M\lra N$, there is a commutative diagram
\begin{equation}\label{eqn-commutative-diagram-in-CBH}
    \begin{gathered}
    \xymatrix{
    0 \ar[r] & M \ar[r]^-{\lambda_M} \ar[d]^f &  M \otimes H \ar[d]^{\psi} \ar[r] & M[1] \ar[r] \ar@{=}[d]& 0 \\
    0 \ar[r] &  N \ar[r]^g &  C_f \ar[r]^\jmath & M[1] \ar[r] & 0
    }
    \end{gathered} \ .
\end{equation}
Then the sextuple 
\begin{equation}\label{eqn-triangle-in-CBH}
    M \stackrel{f}{\lra} N \stackrel{g}{\lra} C_f \stackrel{\jmath}{\lra} M[1]
\end{equation}
is declared a \emph{standard distinguished triangle} in $\mc{C}(B,H)$. Any triangle in $\mc{C}(B,H)$ isomorphic to a standard distinguished triangle is called a \emph{distinguished triangle}.

\begin{lem}\label{lem-zero-obj-criterion}
A $B$-module $M$ descends to the zero object in $\mc{C}(B,H)$ if and only if $M$ is a direct summand of $ M \otimes H$.
\end{lem}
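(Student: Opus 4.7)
The easy direction ($\Leftarrow$) is immediate: if $M$ is a direct summand of $M\otimes H$, then $\id_M$ factors through $M\otimes H$, which is of the required form $M'\otimes H$ with $M'=M$, so $M\in\mc{N}_H$ and $M\cong 0$ in $\mc{C}(B,H)$.

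For the converse, suppose $\id_M = q\circ p$ in $B\dmod$ with $p\colon M\lra N\otimes H$ and $q\colon N\otimes H\lra M$, for some $B$-module $N$. The strategy is to use the comultiplication $\Delta$ of $H$ to bootstrap this factorization through $N\otimes H$ into a factorization of $\id_M$ through $M\otimes H$ itself. Specifically, I would set
\[
\tilde{p}\;:=\;(q\otimes \id_H)\circ(\id_N\otimes \Delta)\circ p\colon M\lra M\otimes H, \qquad \tilde{q}\;:=\;\rho_M\colon M\otimes H\lra M.
\]
Informally, $\tilde{p}$ duplicates the $H$-factor of $p(m)$ via $\Delta$, feeds one copy back through $q$, and retains the other copy in $M\otimes H$; then $\tilde{q}=\rho_M$ collapses that surviving $H$-factor via the counit.

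The verification has two stages. First, I would check that $\rho_M$, $\id_N\otimes\Delta$ and $q\otimes\id_H$ are all $B$-linear. The $B$-linearity of $\rho_M$ follows directly from the counit axiom for $\Delta_B$ in \eqref{eqn-H-comod-alg-1}. The $B$-linearity of $\id_N\otimes\Delta$ rests on the fact that $\Delta\colon H\lra H\otimes H$ is itself a left $H$-module homomorphism when $H\otimes H$ carries the diagonal $H$-action, which holds because $\Delta$ is an algebra homomorphism. The $B$-linearity of $q\otimes\id_H$ then comes from combining coassociativity of $\Delta_B$ (the second identity in \eqref{eqn-H-comod-alg-1}) with the given $B$-linearity of $q$. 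Second, I would verify $\tilde{q}\tilde{p}=\id_M$: writing $p(m)=\sum_i n_i\otimes h_i$,
\[
\tilde{q}\tilde{p}(m)\;=\;\sum_i \epsilon\!\left((h_i)_2\right) q\!\left(n_i\otimes(h_i)_1\right)\;=\;\sum_i q(n_i\otimes h_i)\;=\;qp(m)\;=\;m,
\]
where the middle equality is a single application of the counit axiom $(\id_H\otimes\epsilon)\Delta=\id_H$ together with the $\Bbbk$-linearity of $q$. Once established, $\tilde{p}$ and $\tilde{q}$ split $\id_M$ and exhibit $M$ as a direct summand of $M\otimes H$.

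The main obstacle I anticipate is the bookkeeping in the first stage: tracking the $B$-action across two separate $H$-tensor factors on $N\otimes H\otimes H$ requires careful use of iterated Sweedler notation and coassociativity of $\Delta_B$, though all of it is routine given the compatibility axioms already recorded in Section \ref{sec-basic-def}.
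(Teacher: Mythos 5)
Your proof is correct, but it takes a genuinely different route from the paper. The paper's argument for the nontrivial direction is a one-line citation of Khovanov's Lemma~1 in \cite{Hopforoots}, which asserts the stronger statement that any null-homotopic morphism out of $M$ factors through the \emph{canonical} embedding $\lambda_M\colon M\to M\otimes H$; applied to $\id_M$, this immediately splits $\lambda_M$ and exhibits $M$ as a summand of $M\otimes H$. You instead give a self-contained construction: starting from an arbitrary factorization $\id_M=qp$ through some $N\otimes H$, you manufacture a splitting of $M$ off $M\otimes H$ via $\tilde p=(q\otimes\id_H)(\id_N\otimes\Delta)p$ and $\tilde q=\rho_M$. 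All the claimed $B$-linearity checks go through: $\rho_M$ is $B$-linear by the counit axiom for $\Delta_B$; $\id_N\otimes\Delta$ is $B$-linear because $\Delta$ is a left $H$-module map onto the diagonal action (it is an algebra map) and the action \eqref{eqn-abelian-H-action} is bifunctorial, using coassociativity of $\Delta_B$ to identify $(N\otimes H)\otimes H$ with $N\otimes(H\otimes H)$; and $q\otimes\id_H$ is $B$-linear by functoriality of $-\otimes H$. The computation $\tilde q\tilde p=qp=\id_M$ is exactly the counit axiom $(\id_H\otimes\epsilon)\Delta=\id_H$. The trade-off: the paper's route is shorter and yields the sharper conclusion that the splitting is realized by $\lambda_M$ itself (a fact the paper reuses elsewhere, e.g.\ in the proof of Theorem \ref{thm-ses-to-dt}), whereas your argument only produces \emph{some} split embedding $\tilde p\colon M\to M\otimes H$ --- which is all the lemma as stated requires --- and has the virtue of being verifiable without consulting the reference; in effect you have inlined a proof of the relevant part of the cited lemma.
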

\begin{proof}
By \cite[Lemma 1]{Hopforoots},  a null-homotopic $f:M\lra N$ necessarily factors through the canonical embedding 
$\lambda_M: M\lra  M \otimes H$. Thus, for the identity morphism $\Id_M:M\lra M$ to be null-homotopic, it needs to factor through the canonical embedding $\lambda_M$. This exhibits $M$ as a direct summand of $ M \otimes H $. The converse is clear.
\end{proof}

\begin{cor}\label{cor-proj-B-mod-vanish}
The class of modules $\mc{N}_H$ contains both projective $B$-modules and injective $B$-modules. Consequently, any projective or injective $B$-module descends to the zero object in $\mc{C}(B, H)$.
\end{cor}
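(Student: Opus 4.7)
The plan is to exhibit, for every projective (resp.\ injective) $B$-module $M$, a factorization of $\Id_M$ through some $B$-module of the form $N \otimes H$; once this is done, $M$ lies in $\mc{N}_H$ by definition of null-homotopy, and Lemma \ref{lem-zero-obj-criterion} immediately yields the ``consequently'' clause. The key preliminary is to verify that the canonical maps $\lambda_M$ and $\rho_M$ from \eqref{eqn-canonical-inj}--\eqref{eqn-canonical-surj}, which a priori were only used as $H$-linear maps, in fact underlie $B$-module morphisms for any $B$-module $M$. For $\rho_M \colon M \otimes H \to M$, $B$-linearity follows from the counit axiom $(\Id_B \otimes \epsilon)\Delta_B = \Id_B$ in \eqref{eqn-H-comod-alg-1}, and surjectivity is clear. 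For $\lambda_M \colon M \to M \otimes H$, $m \mapsto m \otimes \Lambda$, the defining property $h\Lambda = \epsilon(h)\Lambda$ of the left integral (combined with the counit axiom) forces $b \cdot (m \otimes \Lambda) = \sum_b b_1 m \otimes \epsilon(b_2)\Lambda = (bm)\otimes\Lambda$; injectivity follows because $\Lambda \neq 0$.

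With these canonical $B$-linear maps in hand, both halves of the corollary become one-line applications of the defining universal properties. If $P$ is a projective $B$-module, then $\Id_P$ lifts along the $B$-linear epimorphism $\rho_P \colon P \otimes H \twoheadrightarrow P$, producing a section $s \colon P \to P \otimes H$ and hence the desired factorization $\Id_P = \rho_P \circ s$ through an object of the form $P \otimes H$. Dually, if $I$ is injective, then $\Id_I$ extends along the $B$-linear monomorphism $\lambda_I \colon I \hookrightarrow I \otimes H$, yielding a retraction $r \colon I \otimes H \to I$ with $\Id_I = r \circ \lambda_I$. In either case the relevant module lies in $\mc{N}_H$, and Lemma \ref{lem-zero-obj-criterion} concludes that it descends to the zero object of $\mc{C}(B,H)$.

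I do not anticipate a serious obstacle: once the $B$-linearity of $\lambda_M$ and $\rho_M$ has been recorded (a short Sweedler-style computation invoking only the counit axiom of $\Delta_B$ and the left-integral identity for $\Lambda$), the argument reduces to the textbook lifting/extension properties of projective and injective objects in an abelian category. The only mild care is in checking that the twisted $B$-action on $M \otimes H$ defined via $\Delta_B$ interacts correctly with $\Lambda$, which is exactly the content of the identity $\sum_b b_1 \epsilon(b_2) = b$ applied after absorbing $b_2$ into $\Lambda$.
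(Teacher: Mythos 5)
Your proposal is correct and follows essentially the same route as the paper: split the canonical $B$-linear surjection $\rho_P\colon P\otimes H\to P$ using projectivity, and split the canonical $B$-linear injection $\lambda_I\colon I\to I\otimes H$ using injectivity, exhibiting the module as a direct summand of an object of $\mc{N}_H$. The extra verification that $\lambda_M$ and $\rho_M$ are $B$-linear (via the left-integral identity and the counit axiom for $\Delta_B$) is a worthwhile addition that the paper leaves implicit, but it does not change the argument.
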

\begin{proof}
Given a projective $B$-module $P$, the canonical surjective map $\rho_P: P\otimes H \lra P$ splits since $P$ is projective, exhibiting $P$ as a direct summand of $P\otimes H$. Likewise, given an injective $B$-module $I$, the embedding $\lambda_I: I \lra I\otimes H$ splits and shows that $I\in \mc{N}_H$.
\end{proof}

\paragraph{Short exact sequences and distinguished triangles.}
Next, we point out some relationship between short exact sequences of $B$-modules and distinguished triangles in $\mc{C}(B,H)$. Note that here the situation is more general than that of \cite[Section 4.2]{QYHopf}.

\begin{thm}\label{thm-ses-to-dt}
Let 
$$ 
0 \lra M \stackrel{f}{\lra} N \stackrel{g}{\lra} L \lra 0
$$
be a short exact sequence of $B$-modules that becomes split exact upon tensoring with $H$. Then there is a distinguished triangle\footnote{The connecting map $\mu$ is described explicitly in the proof} in $\mc{C}(B,H)$:
\[
M \stackrel{f}{\lra} N \stackrel{g}{\lra} L \stackrel{\mu }{\lra} M[1].
\]
Conversely, any distinguished triangle in $\mc{C}(B,H)$ is isomorphic to one arising from such a short exact sequence.
\end{thm}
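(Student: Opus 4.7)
To prove this, I would adapt the standard argument for stable categories of Frobenius exact categories (cf.~\cite{Ha,Ke1}) to the Frobenius exact structure $(B\dmod, \mc{E})$ from Section~\ref{sec-Frobenius-exact}, whose $\mc{E}$-injective-projectives are precisely the modules of the form $X\otimes H$. For the forward direction, given the $H$-split sequence with $B$-module splitting $\tau\colon N\otimes H \to M\otimes H$ of $f\otimes\Id_H$, the plan is to define $s\colon N \to M\otimes H$ by $s(n) := \tau(n\otimes\Lambda)$, which satisfies $s\circ f = \lambda_M$ by a direct calculation. The pair $(s,\Id_{M\otimes H})$ then factors through the pushout $C_f$ of diagram~\eqref{eqn-commutative-diagram-in-CBH} and produces a $B$-module retraction $r\colon C_f \to M\otimes H$ of the pushout map $f_*\colon M\otimes H \hookrightarrow C_f$. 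Hence $C_f \cong (M\otimes H)\oplus\ker r$ with $\ker r \cong L = N/f(M)$, and since the $(M\otimes H)$-summand vanishes in $\mc{C}(B,H)$ by Corollary~\ref{cor-proj-B-mod-vanish}, the standard triangle of $f$ becomes the desired distinguished triangle $M\to N\to L\to M[1]$, with connecting map explicitly given by $h(l) = [-s(n)]\in M[1]$ for any preimage $n$ of $l$.

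For the converse, any distinguished triangle is isomorphic to a standard one $M\xrightarrow{f} N\xrightarrow{g_0} C_f \xrightarrow{h_0} M[1]$ arising from some morphism $f$ in $B\dmod$, so it suffices to realize this from an $H$-split SES. I would use the mapping cylinder sequence
\[
0 \lra M \xrightarrow{(f,-\lambda_M)} N\oplus(M\otimes H) \xrightarrow{q} C_f \lra 0,
\]
whose middle term is isomorphic to $N$ in $\mc{C}(B,H)$ via the first projection. The key step is proving this SES is $H$-split, which reduces to producing a $B$-linear retraction of $\lambda_M\otimes\Id_H\colon M\otimes H \to M\otimes H\otimes H$; such a retraction exists by the following chain of observations: the top row $0\to M \to M\otimes H \to M[1]\to 0$ of the defining pushout is $H$-split because $\lambda_M$ serves as the $\mc{E}$-injective envelope of $M$ in the Frobenius structure; the functor $-\otimes H$ preserves $H$-splittings (by tensoring the splitting with $\Id_H$); and the resulting $\mc{E}$-exact sequence $0\to M\otimes H \to M\otimes H\otimes H \to M[1]\otimes H \to 0$ has $\mc{E}$-projective rightmost term $M[1]\otimes H$, hence splits in $B\dmod$. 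Applying the forward direction to the mapping cylinder SES then produces a distinguished triangle that matches the original standard triangle term-by-term.

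The principal obstacle will be the bookkeeping of the connecting maps in the converse: one must verify that the connecting map produced by applying the forward direction to the mapping cylinder SES coincides, under the isomorphism $N\oplus(M\otimes H)\cong N$ in $\mc{C}(B,H)$, with the original $h_0$ from the pushout construction. This is a somewhat intricate diagram chase that hinges on the coherence of the various Hopf-algebraic retractions arising from iterated tensoring with $H$, and ultimately rests on the structural observations of Section~\ref{sec-basic-def} (especially Lemma~\ref{lem-H-tensor-M}).
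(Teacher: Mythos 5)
Your proposal is correct and follows essentially the same route as the paper: the forward direction uses the pushout $C_f$ together with a retraction induced from the $H$-tensored splitting to decompose $C_f\cong(M\otimes H)\oplus L$ (your map $n\mapsto\tau(n\otimes\Lambda)$ is exactly the paper's composite $s\circ h_2\circ h_1 = s\circ\lambda_N$, just built in one step), and the converse uses the same mapping-cylinder sequence $0\to M\to (M\otimes H)\oplus N\to C_f\to 0$, shown to be $H$-split via the splitting of $\lambda_M\otimes\Id_H$ (which the paper deduces directly from the split injection $\Lambda\otimes\Id_H\colon H\to H\otimes H$, making your extra pass through $\mc{E}$-projectivity of $M[1]\otimes H$ redundant though harmless). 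Your closing concern about matching the connecting maps is reasonable but is exactly the point the paper dismisses as ``clear,'' so no substantive gap separates the two arguments.
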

\begin{proof}
Given a short exact sequence as in the statement of the theorem, we have a pushout diagram along $f$, by construction of distinguished triangles,
\begin{equation}
    \begin{gathered}
     \xymatrix{
     0 \ar[r] & M \ar[r]^f \ar[d]_{\lambda_M} & N \ar[r]^g \ar[d]_{\mu_1} & L \ar@{=}[d]\ar[r] & 0\\
     0 \ar[r] &  M \otimes H \ar[r]^-{f_1} & C_f  \ar[r]^{g_1} & L \ar[r] & 0 \\
     }
    \end{gathered} \ .
\end{equation}
At the same time, we also have a commutative diagram
\begin{equation}
    \begin{gathered}
     \xymatrix{
     0 \ar[r] & M \ar[r]^f \ar[d]_{\lambda_M} & N \ar[r]^g \ar[d]_{\lambda_N} & L \ar[d]_{\lambda_L}\ar[r] & 0\\
     0 \ar[r] &  M \otimes H \ar[r]^-{f_2} & N \otimes H \ar@/^/[l]^{s} \ar[r]^{g_2} & L\otimes H \ar[r] & 0
     }
    \end{gathered} \ .
\end{equation}
The universal property of pushout gives us a factorization of the above commutative diagrams:
\begin{equation}\label{eqn-combined-diagram}
    \begin{gathered}
     \xymatrix{
     0 \ar[r] & M \ar[r]^f \ar[d]_{\lambda_M} & N \ar[r]^g \ar[d]_{\mu_1} & L \ar@{=}[d]\ar[r] & 0\\
     0 \ar[r] & M\otimes H \ar@{=}[d]\ar[r]^-{f_1} & C_f \ar[d]_{\mu_2} \ar[r]^{g_1} & L \ar[r] \ar[d]^{\lambda_L}& 0 \\
     0 \ar[r] &  M \otimes H \ar[r]^{f_2}  &  N \otimes H \ar[r]^{g_2} \ar@/^/[l]^s &  L \otimes H \ar[r] & 0 
     }
    \end{gathered} \ .
\end{equation}
Here $\mu_2\mu_1=\lambda_N$ and $\mu_2f_1=f_2$. It follows that
\begin{equation}
    s\mu_2f_1=sf_2=\Id_{M \otimes H },
\end{equation}
and the middle sequence splits as well in \eqref{eqn-combined-diagram} to give $C_f\cong M\otimes H \oplus L$. It follows that, in $\mc{C}(B,H)$, $g_1$ descends to an isomorphism
$ g_1: C_f\cong L$. Set $\mu:=\jmath g_1^{-1}$, where $\jmath:C_f\lra M[1]$ is the map in \eqref{eqn-commutative-diagram-in-CBH}. Since we also have
$ g=g_1\mu_1 $, 
the desired distinguished triangle is exhibited.

The converse part of the result is clear, since we have the short exact sequence of $B$-modules presenting $C_f$ in equation \eqref{eqn-commutative-diagram-in-CBH}:
\begin{equation}\label{eqn-ses-Cf}
    0 \lra M \xrightarrow{(-\lambda_M, f)}  M \otimes H \oplus N \lra C_f \lra 0.
\end{equation}
The sequence splits upon tensoring with $H$ since 
\begin{equation}
    \Lambda \otimes \Id_H: H \lra H\otimes H
\end{equation}
is a split injection of left $H$-modules (see \cite[proof of Lemma 1]{Hopforoots}).
The theorem follows.
\end{proof}

\begin{rem}
The condition of the split exactness of a short exact sequence of $B$-module after tensoring with $H$ is equivalent to requiring that there exists a finite-dimensional $H$-module $V$ such that, after tensoring with $V$, the sequence splits. Indeed, if given a short exact sequence of $B$-modules as above, and $V$ is an $n$-dimensional $H$-module such that
\begin{equation}
0 \lra M\otimes V  \xrightarrow{f\otimes \Id_V} N\otimes V \xrightarrow{g\otimes \Id_V} L\otimes V \lra 0
\end{equation}
splits, then so splits
\begin{equation}
0 \lra M\otimes V \otimes H  \xrightarrow{f\otimes \Id_{V\otimes H}} N\otimes V\otimes H \xrightarrow{g\otimes \Id_{V\otimes H}} L\otimes V \otimes H\lra 0 .
\end{equation}
Then, using Lemma \ref{lem-H-tensor-M}, the sequence is isomorphic to
\begin{equation}
0 \lra (M\otimes H)^{\oplus n}  \xrightarrow{f\otimes \Id_{ H}^{\oplus n} } (N\otimes H)^{^{\oplus n}} \xrightarrow{g\otimes \Id_{ H}^{\oplus n}  } (L\otimes V)^{\oplus n} \lra 0 .
\end{equation}
The desired splitting after tensoring with $H$ follows by restriction to one coordinate. The converse is clear by taking $V=H$.
\end{rem}

\begin{example}\label{eg-H-mod-conflations}
If $B=H$, the homotopy category $\mc{C}(B,H)$ is equivalent to $H\udmod$. Any short exact sequence of $H$-modules leads to a distinguished triangle in $H\udmod$. This well-known fact (see, for instance, \cite[Section 2.7]{Hap88}) can be viewed as a special case of Theorem \ref{thm-ses-to-dt}. This is because any $H$-module, when tensored by $H$, is projective. Thus any short exact sequence of $H$-modules, when tensored with $H$, is split exact.
\end{example}

\begin{example}\label{eg-A-H-mod-conflations}
In \cite{Hopforoots, QYHopf}, one main class of examples is provided by $B= A\# H$ for a left $H$-module algebra $A$. It is shown in \cite[Lemma 4.3]{QYHopf} that, any distinguished triangle in this case arises from a short exact sequence of $B$-module that splits over $A$. If
\begin{equation}
    0 \lra M \stackrel{f}{\lra} N \stackrel{g}{\lra} L \lra 0
\end{equation}
is such a sequence, with $\gamma^\prime: L \lra N$ (resp.~$\beta^\prime: L \lra N$) serving as an $A$-splitting map of $g$, then
\begin{equation}
    0 \lra  M \otimes H \xrightarrow{f\otimes \Id_H }  N \otimes H \xrightarrow{g\otimes\Id_H }  L \otimes H \lra 0
\end{equation}
is $B$-split. Indeed, one can show that (c.f.~\cite[Lemma 4.4]{QYHopf})
\begin{subequations}
\begin{equation}
    \gamma :  L\otimes H \lra  N \otimes H, \quad \quad
    \gamma( x \otimes  h ):= \sum_h (h_2 \cdot \gamma^\prime(S^{-1}(h_1)\cdot x)) \otimes h_3
\end{equation}
\begin{equation}
   \left(\textrm{resp}. \quad N\otimes H \lra  M \otimes H, \quad \quad
    \beta( y \otimes  h ):= \sum_h (h_2 \cdot \beta^\prime(S^{-1}(h_1)\cdot y)) \otimes h_3 \right)
\end{equation} 
\end{subequations}
for any $h\in H$ and $x\in L$ (resp. $y\in N$), is $B$-linear and splits $g\otimes \Id_H$ (resp. $f\otimes \Id_H$) as well.
\end{example}

\section{Exact structure and stable category}\label{sec-Frobenius-exact}
We next show that the $B\dmod$ can be endowed with an \emph{exact category structure} (see, for instance, \cite[Section 4]{KeDerCatUse} and \cite{NeeDerivedExact}) that is \emph{Frobenius}. The stable category of this Frobenius exact structure recovers $\mc{C}(B,H)$.

Recall that an additive category is called \emph{exact} if it is endowed with a distinguished class $\mc{E}$ of sequences called \emph{conflations} (also called admissible short exact sequences)
\begin{equation}\label{eqn-conflation-sequence}
    0 \lra M \stackrel{f}{\lra} N \stackrel{g}{\lra} L \lra 0
\end{equation}
where $f$ is a kernel of $g$ and $g$ is a cokernel of $f$. Such $f$'s are called \emph{inflations} and $g$'s are called \emph{deflations}. The inflations and deflations are confined by the axioms
\begin{enumerate}
    \item[(0)] The identity morphism of $0$ is a deflation.
    \item[(1)] The composition of two deflations is a deflation. The composition of two inflations is an inflation.
    \item[(2)] Deflations are preserved under pullbacks, while inflations are preserved under pushouts.
\end{enumerate}

Motivated by Theorem \ref{thm-ses-to-dt}, we define, on $B\dmod$, a conflation to be a short exact sequence of $B$-modules that becomes split exact after tensoring with $H$. Set $\mc{E}$ to be the class of such conflations.

\begin{lem}\label{lem-exact-structure}
The inflation, deflation and conflation sequences define an exact structure on $B\dmod$.
\end{lem}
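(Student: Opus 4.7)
The plan is to verify Quillen's three axioms (0), (1), (2) directly for $\mc{E}$. The key preliminary observation is that $-\otimes H$ is an exact endofunctor on $B\dmod$: the underlying $\Bbbk$-vector space of $M\otimes H$ is $M\otimes_\Bbbk H$, and since $H$ is a $\Bbbk$-vector space it is $\Bbbk$-flat, so tensoring preserves exactness at the $\Bbbk$-level; the $B$-module maps remain $B$-linear by naturality of $\Delta_B$. Consequently a short exact sequence of $B$-modules is always sent to a short exact sequence, and membership in $\mc{E}$ is simply the extra requirement that this sequence obtained after $-\otimes H$ admits a $B$-linear section.

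Axiom (0) is immediate. For axiom (1), given composable deflations $N_1 \stackrel{g_1}{\twoheadrightarrow} N_2 \stackrel{g_2}{\twoheadrightarrow} N_3$ in $\mc{E}$ with $B$-linear sections $s_i$ of $g_i\otimes \id_H$, the composite $s_1 s_2$ splits $(g_2g_1)\otimes \id_H$, so $g_2 g_1$ is again a deflation in $\mc{E}$. The composition of inflations is formally dual, with $B$-linear retractions playing the role of sections.

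For axiom (2), I would invoke the standard fact that in any abelian category the class of split short exact sequences is closed under pullbacks and pushouts. Given a conflation $0\to K \to N \to L \to 0$ in $\mc{E}$ and a morphism $h: L' \to L$, the abelian pullback in $B\dmod$ yields a short exact sequence $0 \to K \to N \times_L L' \to L' \to 0$. Because $-\otimes H$ is exact and preserves finite limits at the $\Bbbk$-vector-space level (and hence at the $B$-module level, since the $B$-action is simply transported), tensoring produces the pullback of the split sequence for $N \twoheadrightarrow L$ along $h\otimes \id_H$, which is therefore itself split. Closure of inflations under pushout is dual. No essential obstacle is anticipated; the whole verification reduces to the exactness of $-\otimes H$ together with the elementary stability of splittings under pullback and pushout in an abelian category.
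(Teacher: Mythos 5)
Your proposal is correct and follows essentially the same route as the paper: axioms (0) and (1) are checked directly from the splittings, and axiom (2) reduces to the exactness of $-\otimes H$ together with the stability of split sequences under pullback/pushout. The only cosmetic difference is that the paper writes down the splitting of the pulled-back sequence explicitly (as $\gamma(h_1\otimes \Id_H)$, a retraction of the tensored inflation) rather than invoking that $-\otimes H$ preserves pullbacks and that pullbacks of split sequences split; the two arguments are the same computation.
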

\begin{proof}
The facts that the identity morphism of the zero object is a deflation, compositions of two deflations (resp.~inflations) are deflations (resp.~inflations) are easy to check. It thus suffices to verify that deflations are stable under pullbacks and inflations are stable under pushouts. We just check for deflations as an illustration.

If  $g:N\lra L$ is a deflation that fits into a sequence \eqref{eqn-conflation-sequence}, then $g$ splits upon tensoring with $\Id_H$.  Letting $ \mu: L_1 \lra L $ be a map of $B$-modules, we have a pullback diagram
\begin{equation}
    \begin{gathered}
     \xymatrix{
     0 \ar[r] & M \ar[r]^{f_1} \ar@{=}[d] & N_1 \ar[r]^{g_1} \ar[d]^{\mu_1} & L_1 \ar[d]^{\mu}\ar[r] & 0\\
     0 \ar[r] &  M  \ar[r]^-{f} & N  \ar[r]^{g} & L \ar[r] & 0 \\
     }
    \end{gathered} \ .
\end{equation}
Upon tensoring with $H$, the bottom sequence becomes split exact. Let $\gamma: N\otimes H\lra M\otimes H$ be a splitting map for $f\otimes \Id_H$. Then $\gamma (\mu_1\otimes \Id_H)$ splits the top sequence tensored with $H$. 
\end{proof}

\begin{lem}
The class $\mc{E}$ of conflations in $B\dmod$ is stable under tensor products with $H\dmod$.
\end{lem}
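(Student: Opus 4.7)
The plan is to reduce the assertion to a naturality argument built from Lemma \ref{lem-H-tensor-M}. Given a conflation $0 \to M \xrightarrow{f} N \xrightarrow{g} L \to 0$ in $\mc{E}$ and $V \in H\dmod$, I would first observe that the tensored sequence
\begin{equation*}
0 \to M \otimes V \xrightarrow{f \otimes \Id_V} N \otimes V \xrightarrow{g \otimes \Id_V} L \otimes V \to 0
\end{equation*}
is short exact in $B\dmod$, since $-\otimes_\Bbbk V$ is exact over the field $\Bbbk$ and each term carries the diagonal $B$-action via $\Delta_B$. It then remains to verify that this sequence splits in $B\dmod$ after tensoring with $H$.

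To handle the splitting, I would first establish an auxiliary natural $B$-module isomorphism
\begin{equation*}
X \otimes V \otimes H \;\cong\; (X \otimes H) \otimes V_0
\end{equation*}
for $X \in B\dmod$, where $V_0$ denotes the vector space underlying $V$ endowed with the trivial $H$-action. The idea is to compose the $H$-module isomorphism $\phi \colon V \otimes H \cong V_0 \otimes H$ of Lemma \ref{lem-H-tensor-M} with the swap $V_0 \otimes H \cong H \otimes V_0$; the latter is $H$-linear precisely because $V_0$ is trivial, so $H$ acts on both sides by left multiplication on the $H$-factor. Tensoring this composite with $X$ on the left via the categorical action \eqref{eqn-abelian-H-action} yields the desired $B$-module isomorphism.

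Applying this naturally at $X = M, N, L$ identifies the sequence $0 \to M \otimes V \otimes H \to N \otimes V \otimes H \to L \otimes V \otimes H \to 0$ with
\begin{equation*}
0 \to (M \otimes H) \otimes V_0 \to (N \otimes H) \otimes V_0 \to (L \otimes H) \otimes V_0 \to 0.
\end{equation*}
The latter is obtained from the split short exact sequence $0 \to M \otimes H \to N \otimes H \to L \otimes H \to 0$ (split as $B$-modules by the defining property of a conflation) by tensoring over $\Bbbk$ with the underlying vector space $V_0$, an operation that manifestly preserves splitness of $B$-module sequences. This would complete the proof. The main (and only) calculation is verifying $B$-equivariance of the displayed auxiliary isomorphism, which follows routinely from the coassociativity axiom \eqref{eqn-H-comod-alg-1} for $\Delta_B$ together with the triviality of the $H$-action on $V_0$; beyond this bookkeeping there are no serious obstacles.
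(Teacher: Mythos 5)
Your proof is correct and follows essentially the same route as the paper: both arguments commute the $H$-factor past $V$ using the isomorphisms of Lemma \ref{lem-H-tensor-M} composed with a flip, thereby reducing the splitting of the sequence tensored with $V\otimes H$ to the already-known splitting of the sequence tensored with $H$, which survives further tensoring. The only cosmetic difference is that you trivialize the action on $V$ (landing in $(X\otimes H)\otimes V_0$), whereas the paper keeps the $H$-action on $U$ and uses the functorial isomorphism $H\otimes U\cong U\otimes H$; both steps rest on the same lemma and yield the same conclusion.
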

\begin{proof}
Given a conflation sequence \eqref{eqn-conflation-sequence} and a module $U\in H\dmod$, we need to show that the sequence tensored with $U\otimes H$ still splits. This follows from the fact that there is a functorial-in-$U$ isomorphism
\begin{equation}
    H\otimes U \lra U\otimes H, \quad \quad h\otimes u\mapsto \sum_h h_1S(h_3)u\otimes h_2,
\end{equation}
which arises from composing the two isomorphisms in Lemma \ref{lem-H-tensor-M}. The inverse map is given by
\begin{equation}
     U \otimes H \lra H \otimes U, \quad \quad u\otimes h\mapsto \sum_h h_2\otimes h_3S^{-1}(h_1) u.
\end{equation}
The result follows.
\end{proof}

Consequently, given $U\in H\dmod$, the functor
\begin{equation}
    B\dmod \lra B\dmod, \quad \quad M\mapsto M\otimes U
\end{equation}
is exact with respect to $\mc{E}$. Furthermore, if $U$ is a projective $H$-module, tensoring with $U$ sends any short exact sequence in $\mc{E}$ into a split short exact sequence. 

\begin{lem}
The category $B\dmod$ with the exact structure $\mc{E}$ is Frobenius. The class of $\mc{E}$-projective-injective objects coincide with $\mc{N}_H$.
\end{lem}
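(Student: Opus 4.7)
The plan is to verify the three Frobenius axioms for the exact category $(B\dmod, \mc{E})$: enough $\mc{E}$-injectives, enough $\mc{E}$-projectives, and the coincidence of these two classes (moreover with $\mc{N}_H$). The linchpin is that any object of the form $N \otimes H$ lies in $\mc{N}_H$ tautologically, so Lemma \ref{lem-zero-obj-criterion} furnishes a $B$-linear retraction $r_0 : (N \otimes H) \otimes H \to N \otimes H$ of $\lambda_{N \otimes H}$; dually, the map $\iota_0 := \Id_N \otimes \Delta : N \otimes H \to (N \otimes H) \otimes H$ is a $B$-linear section of $\rho_{N \otimes H}$, its $B$-linearity following from coassociativity of $\Delta_B$ together with multiplicativity of $\Delta$.

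The first step is to show that every $N \otimes H$ is both $\mc{E}$-projective and $\mc{E}$-injective. For projectivity, given a deflation $g : Y \to Z$ in $\mc{E}$ with $B$-linear section $s : Z \otimes H \to Y \otimes H$ of $g \otimes \Id_H$ and any map $f : N \otimes H \to Z$, define
\[
\tilde{f} \;:=\; \rho_Y \circ s \circ (f \otimes \Id_H) \circ \iota_0.
\]
Then $g \circ \tilde{f} = f$ by the naturality identities $g \circ \rho_Y = \rho_Z \circ (g \otimes \Id_H)$ and $\rho_Z \circ (f \otimes \Id_H) = f \circ \rho_{N \otimes H}$, combined with $(g \otimes \Id_H) \circ s = \Id$ and $\rho_{N \otimes H} \circ \iota_0 = \Id$. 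Dually, for an inflation $j : X \to Y$ with $B$-linear retraction $r : Y \otimes H \to X \otimes H$ of $j \otimes \Id_H$ and a map $f : X \to N \otimes H$, the composite $\tilde{f} := r_0 \circ (f \otimes \Id_H) \circ r \circ \lambda_Y$ extends $f$ along $j$, proving injectivity.

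The second step is to show that for every $M$, the canonical maps $\lambda_M$ and $\rho_M$ are respectively an $\mc{E}$-inflation and an $\mc{E}$-deflation. For $\rho_M$, taking $N := M$ in $\iota_0 = \Id_M \otimes \Delta$ already provides a $B$-linear section of $\rho_M \otimes \Id_H$ after tensoring with $H$. For $\lambda_M$, constructing a $B$-linear retraction of $\lambda_M \otimes \Id_H$ reduces (by the same tensor-identity manipulation as in the previous lemma) to producing an $H$-linear retraction of the map $H \to H \otimes H$, $h \mapsto \Lambda \otimes h$; this is the Frobenius-algebra fact invoked in the proof of Theorem \ref{thm-ses-to-dt} (cf.~\cite[proof of Lemma 1]{Hopforoots}). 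Combined with the first step, this ensures that $(B\dmod, \mc{E})$ has enough $\mc{E}$-injectives via $\lambda_M$ and enough $\mc{E}$-projectives via $\rho_M$.

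Finally, for the identification of classes: any $\mc{E}$-injective $P$ fits into the conflation $0 \to P \to P \otimes H \to P[1] \to 0$ produced in the second step, which splits by $\mc{E}$-injectivity of $P$, exhibiting $P$ as a direct summand of $P \otimes H$; hence $P \in \mc{N}_H$ by Lemma \ref{lem-zero-obj-criterion}. The argument for $\mc{E}$-projectives is dual. Conversely, any $N \in \mc{N}_H$ is a direct summand of $N \otimes H$, which is $\mc{E}$-projective-injective by the first step, and these properties pass to direct summands. I expect the main obstacle to be the retraction argument for $\lambda_M$ in the second step, as it requires the Frobenius structure of $H$ in an essential way (beyond mere finite-dimensionality).
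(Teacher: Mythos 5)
Your proof is correct and takes essentially the same approach as the paper: both establish that modules of the form $N\otimes H$ are $\mc{E}$-projective-injective by combining the splitting of a conflation after $\otimes H$ with a canonical (co)section on the $N\otimes H$ side (your $r_0(f\otimes \Id_H)r\lambda_Y$ is the paper's $\gamma_2(f\otimes\Id_H)\gamma_1\lambda_L$), then note that $\lambda_M$ and $\rho_M$ are an inflation and a deflation to get enough projective-injectives, and identify the class with $\mc{N}_H$ via direct summands. You simply supply more explicit detail (the section $\iota_0=\Id\otimes\Delta$, the dual argument for projectivity spelled out rather than obtained by ``reversing arrows,'' and the final identification that the paper declares clear), all of which checks out.
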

\begin{proof}
Let us show that the class of objects in $\mc{N}_H$ is projective-injective with respect to the specified exact structure $\mc{E}$. It suffices to show this for modules of the form $M\otimes H$. 

Given an inflation $\jmath: N \lra L$ and a $B$-module homomorphism  $f : N\lra M\otimes H$, we have a commutative diagram (solid part):
\begin{equation}
    \begin{gathered}
    \xymatrix{
      N\ar@{=}[d] \ar[rr]^{f} && M\otimes H\ar[d]_{\lambda_{M\otimes H}} \\
     N  \ar[r]^-{\lambda_N} \ar[d]^{\jmath} & N\otimes H \ar[d]_{\jmath\otimes \Id_H}
    \ar[r]^-{f\otimes \Id_H} & M\otimes H\otimes H  \ar@/_2pc/@{-->}[u]_{\gamma_2} \\
      L \ar[r]^-{\lambda_L} & L\otimes H \ar@{-->}@/_2pc/[u]_{\gamma_1}  }
    \end{gathered} \ .
\end{equation}
Since $\jmath\otimes \Id_H$ splits by assumption and $\lambda_{M\otimes H}$ splits because $\lambda_H:H\lra H\otimes H$ is a split injection of $H$-modules, we obtain the (dashed) splitting maps $\gamma_1$ and $\gamma_2$ respectively, and, as a result an extension diagram
\begin{equation}
 \begin{gathered}
    \xymatrix{
    N \ar[rr]^f \ar[dd]_\jmath && M\otimes H\\
    && \\
    L \ar[uurr]_{\gamma_2 (f\otimes \Id_H)\gamma_1\lambda_L}
    }
    \end{gathered} \ .
\end{equation}
Reversing the arrows while replacing the $\lambda$-maps by the corresponding $\rho$-maps shows that objects of the form $M\otimes H$ are also $\mc{E}$-projective. Thus, under this exact structure on $B\dmod$, the class of $\mc{E}$-projectives coincides with the class of $\mc{E}$-injectives. 

Furthermore, given any $M\in B\dmod$, the canonical embedding \eqref{eqn-canonical-inj} and surjection \eqref{eqn-canonical-surj}
\[
\lambda_M : M\lra M\otimes H,\quad \quad
\rho_M: M\otimes H \lra M
\]
are respectively an inflation and a deflation in $\mc{E}$. Thus there are enough $\mc{E}$-projective-injective objects. The last statement is clear, and the result follows.
\end{proof}

As a consequence, Happel's framework \cite[Chapter I]{Hap88} of taking stable quotients of Frobenius categories applies. One obtains the homotopy category $\mc{C}(B,H)$ from $B\dmod$ by modding out the ideal of morphisms that factor through projective-injective objects.  It immediately re-establishes the following result, proved as \cite[Theorem 1]{Hopforoots} and done by explicitly checking the triangulated category axioms.

\begin{thm}\label{thm-reconstruction}
The stable category $\mc{C}(B,H)$ is triangulated. The category $H\udmod$ acts on the right by exact functors on $\mc{C}(B,H)$.
\end{thm}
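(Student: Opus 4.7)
The plan is to derive Theorem \ref{thm-reconstruction} almost entirely from Happel's stable-category theorem applied to the Frobenius exact structure established in the two preceding lemmas. Since $(B\dmod,\mc{E})$ has been shown to be Frobenius, with $\mc{E}$-projective-injective class equal to $\mc{N}_H$, and since by definition $\mc{C}(B,H)=B\dmod/\mc{N}_H$, the first assertion follows immediately by invoking \cite[Chapter I]{Hap88}: the stable category of a Frobenius exact category is canonically triangulated. It is worth observing that Happel's shift, defined as the cokernel of an $\mc{E}$-inflation into an $\mc{E}$-injective, coincides naturally with the shift $M[1]=\mathrm{Coker}(\lambda_M)$ described in Section \ref{sec-basic-def}, because $\lambda_M$ is itself such an inflation, so the two triangulated structures agree on the nose.

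For the $H\udmod$-action, the goal is to descend the abelian bifunctor \eqref{eqn-abelian-H-action} to a bifunctor $\mc{C}(B,H)\times H\udmod\to\mc{C}(B,H)$ and then check that each $-\otimes U$ is triangulated. I would first verify descent in each argument. If $f\colon M\to N$ factors through some $M'\otimes H$, then $f\otimes \Id_U$ factors through $(M'\otimes H)\otimes U$, which by composing the two isomorphisms in Lemma \ref{lem-H-tensor-M} is isomorphic to $(M'\otimes U)\otimes H\in\mc{N}_H$. Similarly, any map $g\colon U\to V$ factoring through a projective-injective $H$-module factors through some $U_0\otimes H$, and $\Id_M\otimes g$ then factors through $M\otimes(U_0\otimes H)\cong(M\otimes U_0)\otimes H\in\mc{N}_H$. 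This delivers a well-defined bifunctor on the stable categories.

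To see that each functor $-\otimes U$ is triangulated, I would use the preceding lemma asserting that the class $\mc{E}$ of conflations is preserved under tensoring with any $U\in H\dmod$: a conflation remains a conflation, so a standard distinguished triangle attached to a conflation via the pushout construction along $\lambda_M$ is sent to a standard distinguished triangle. The residual check is a natural isomorphism $M[1]\otimes U\cong(M\otimes U)[1]$ in $\mc{C}(B,H)$, which amounts to comparing $\mathrm{Coker}(\lambda_M\otimes \Id_U)$ with $\mathrm{Coker}(\lambda_{M\otimes U})$ through the natural isomorphism $M\otimes H\otimes U\cong M\otimes U\otimes H$.

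The main obstacle will be this last comparison: the isomorphism $H\otimes U\cong U\otimes H$ coming from Lemma \ref{lem-H-tensor-M} does not send $\Lambda\otimes u$ to $u\otimes \Lambda$ on the nose, so one must verify that the resulting natural transformation is in fact an isomorphism of triangulated functors, which reduces to a Sweedler-notation calculation using the defining identity $h\Lambda=\epsilon(h)\Lambda$ and working modulo $\mc{N}_H$. Once that coherence is in place, everything else in the statement is a formal consequence of Happel's theorem together with the two preceding lemmas.
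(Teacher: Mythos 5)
Your proposal is correct and follows essentially the same route as the paper, which simply invokes Happel's theorem \cite[Theorem 2.6]{Hap88} together with the preceding lemmas establishing that $(B\dmod,\mc{E})$ is Frobenius with projective-injectives $\mc{N}_H$ and that $\mc{E}$ is stable under tensoring with $H\dmod$. The additional coherence checks you flag (descent of the bifunctor, compatibility of the shifts) are left implicit in the paper's one-line proof, so your write-up is a more detailed version of the same argument rather than a different one.
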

\begin{proof}
This now directly follows from the above discussion and \cite[Theorem 2.6]{Hap88}.
\end{proof}


\section{A Rickard equivalence}\label{sec-Rickard}
\paragraph{The derived category of an exact category.}
Let us recall the derived category for the exact category $(B\dmod,\mc{E})$. We refer the reader to \cite{KeDerCatUse} and \cite{NeeDerivedExact} for the more general notion of the derived category of an exact category.

As usual, let $\mc{K}^b(B)$ be the abelian category of bounded chain complexes of $B$-modules. An object $M^\bullet\in \mc{K}^b(B)$ consists of bounded complexes 
\begin{equation}
M^\bullet = \left(
\cdots \stackrel{d_{k-2}}{\lra}  M^{k-1}\stackrel{d_{k-1}}{\lra} M^k \stackrel{d_{k}}{\lra} M^{k+1} \stackrel{d_{k+1}}{\lra} \cdots
\right).
\end{equation}
A morphism $f^\bullet: M^\bullet \lra N^\bullet$ is a collection of $B$-module homomorphisms $f^k: M^k \lra N^k$, $k\in \Z$, commuting with the differentials on $M^\bullet$ and $N^\bullet$. A morphism $f^\bullet$ is called \emph{null-homotopic} if there is a collection of maps $h^k: M^k\lra N^{k-1}$ such that
$ f^k = dh^{k}+h^{k+1}d$ holds for all $k\in \Z$. Let $\mc{C}^b(B)$ be the \emph{homotopy category} of $B$ obtained from $\mc{K}^b(B)$ by modding out the ideal of null-homotopic morphisms. In this way, $\mc{C}^b(B)$ becomes a triangulated category.

A complex $M^\bullet$ is called \emph{strictly $\mc{E}$-acyclic} if there are conflations, one for each $k\in \Z$,
\begin{equation}
   0\lra B^{k} \stackrel{\phi_k}{\lra} M^k \stackrel{\psi_k}{\lra} B^{k+1} \lra 0,
\end{equation}
such that $d_k=\phi_{k+1}\psi_k$. A complex $M^\bullet$ is called \emph{$\mc{E}$-acyclic} if it is homotopy equivalent to a strictly $\mc{E}$-acyclic complex.  As is shown in \cite[Lemma 1.1 and 1.2]{NeeDerivedExact}, the full subcategory of $\mc{E}$-acyclic complexes in $\mc{C}^b(B)$ is triangulated and thick. 

\begin{defn}
The \emph{bounded derived category} $\mc{D}^b(B,\mc{E})$ is the Verdier quotient of $\mc{C}^b(B)$ by $\mc{E}$-acyclic complexes.
\end{defn}

By construction, $\mc{D}^b(B,\mc{E})$ is triangulated whose homological shift is denoted $[1]_{\mc{D}}$. Let us also remind the reader how distinguished triangles in $\mc{D}^b(B,\mc{E})$ are constructed (see \cite{KeDerCatUse}). A sequence of maps among chain complexes
\begin{equation}
    0 \lra M^\bullet \stackrel{f^\bullet}{\lra} N^\bullet \stackrel{g^\bullet}{\lra} L^\bullet \lra 0
\end{equation}
is called a \emph{conflation} if, in each degree $k\in \Z$, the sequence
\begin{equation}
    0 \lra M^k \stackrel{f^k}{\lra} N^k \stackrel{g^k}{\lra} L^k \lra 0
\end{equation}
is a conflation in $\mc{E}$. Furthermore, this collection of conflations equips $\mc{K}^b(B)$ with an exact structure.
Such a conflation of chain complexes leads to a distinguished triangle in $\mc{D}^b(B,\mc{E})$. Conversely, any distinguished triangle in $\mc{D}^b(B,\mc{E})$ is isomorphic to one arising in this way.

If $M, N$ are objects of $\mc{D}^b(B,\mc{E})$, we write as in the usual derived category case
\begin{equation}
    \Ext^i_{\mc{D}}(M,N):=\Hom_{\mc{D}^b(B,\mc{E})}(M,N[i]_{\mc{D}}).
\end{equation}

\paragraph{Perfect complexes.}
As in the usual derived category, we consider the notion of perfect complexes.

\begin{defn}
An object in $\mc{D}^b(B,\mc{E})$ is called a \emph{perfect complex} if it is isomorphic to a bounded complex $P^\bullet$ in which each term is $\mc{E}$-projective in $B\dmod$. In other words, each term of $P^\bullet$ lies in $\mc{N}_H$.

We will denote by $\mc{P}_{\mc{E}}$ the full subcategory of perfect complexes in $\mc{D}^b(B,\mc{E})$.
\end{defn}

\begin{lem} \label{lem-thick-PH}
The category $\mc{P}_\mc{E}$ is a thick triangulated subcategory of $\mc{D}^b (B, \mc{E})$.
\end{lem}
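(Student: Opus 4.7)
The plan is to verify the two defining properties of thickness: that $\mc{P}_\mc{E}$ is closed under the triangulated operations of $\mc{D}^b(B,\mc{E})$, and that it is closed under direct summands. Closure under shifts is immediate from the definition of a perfect complex, so the crux is closure under cones and under retracts.

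The central input is the following $\Hom$-vanishing lemma: for any bounded complex $P^\bullet$ with each $P^k \in \mc{N}_H$ and any $\mc{E}$-acyclic complex $X^\bullet$, one has $\Hom_{\mc{C}^b(B)}(P^\bullet, X^\bullet) = 0$. To prove it, I would first reduce to the case where $X^\bullet$ is strictly $\mc{E}$-acyclic; then, using the conflations $0 \to B^k \to X^k \to B^{k+1} \to 0$ realizing strict acyclicity, I would build a contracting homotopy for any given chain map $f^\bullet \maps P^\bullet \to X^\bullet$ by inductively lifting the relevant pieces across the deflations $\psi_k \maps X^k \to B^{k+1}$ via the $\mc{E}$-projectivity of each $P^k$. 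Granted this vanishing, a standard calculus-of-fractions argument implies that the natural functor from the bounded chain-homotopy category of $\mc{N}_H$ into $\mc{D}^b(B,\mc{E})$ is fully faithful, so every morphism $f \maps P^\bullet \to Q^\bullet$ in $\mc{D}^b(B,\mc{E})$ between perfect complexes is represented by an honest chain map. Its mapping cone is then a bounded complex with terms $Q^k \oplus P^{k+1}$ lying in $\mc{N}_H$ (the class being closed under finite direct sums), hence itself perfect; this delivers closure under cones.

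For closure under direct summands I would invoke the classical fact that the bounded chain-homotopy category of any idempotent-complete additive category is itself idempotent-complete. The class $\mc{N}_H$ is idempotent complete: a direct summand $N$ of some $M \in \mc{N}_H$ remains in $\mc{N}_H$, since the retraction exhibits $\Id_N$ as factoring through the same $M' \otimes H$ through which $\Id_M$ factors. Hence the bounded chain-homotopy category $\mc{K}^b(\mc{N}_H)$ is Karoubian. If a perfect complex $P^\bullet$ decomposes as $X \oplus Y$ in $\mc{D}^b(B,\mc{E})$, then the projector $P^\bullet \to X \to P^\bullet$ is an idempotent endomorphism between perfect complexes, which by full faithfulness lifts to a chain-homotopy idempotent on $P^\bullet$ in $\mc{K}^b(\mc{N}_H)$; splitting this idempotent there produces a perfect summand isomorphic in $\mc{D}^b(B,\mc{E})$ to $X$, so that $X \in \mc{P}_\mc{E}$.

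The main technical obstacle is the $\Hom$-vanishing lemma, where the inductive construction of the null-homotopy requires careful bookkeeping of the conflations defining a strictly $\mc{E}$-acyclic complex together with the projectivity lifts against each deflation. The remainder of the argument is formal and parallels the standard treatment in \cite{Orlov} for the ordinary bounded derived category of modules over a ring.
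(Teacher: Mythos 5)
Your argument is correct, and in outline it coincides with the paper's: both proofs identify $\mc{P}_\mc{E}$ with the image of a fully faithful exact embedding of the bounded homotopy category of complexes with terms in $\mc{N}_H$, and then deduce closure under summands from idempotent completeness. The difference lies in how the two steps are justified. For the embedding, the paper simply cites Keller's general theorem on derived categories of fully exact subcategories, whereas you unpack it into a concrete left-orthogonality statement --- $\Hom_{\mc{C}^b(B)}(P^\bullet, X^\bullet)=0$ for $P^\bullet$ a bounded complex of $\mc{E}$-projectives and $X^\bullet$ $\mc{E}$-acyclic, proved by the contracting-homotopy induction --- which works precisely because objects of $\mc{N}_H$ are $\mc{E}$-projective; this buys a self-contained proof at the cost of an argument that does not generalize to arbitrary fully exact subcategories. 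Your treatment of cones is also slightly more explicit than the paper's, which absorbs that point into the statement that the embedding is a triangulated functor. For the summand step, be aware that the ``classical fact'' you invoke --- that $\mc{K}^b(\mc{A})$ is idempotent complete whenever the additive category $\mc{A}$ is --- is not elementary: it is exactly the Balmer--Schlichting theorem on idempotent completeness of bounded derived categories, specialized to the split exact structure on $\mc{N}_H$, which is the very citation the paper uses. So on this point the two proofs rest on the same external input, and you should cite it rather than treat it as folklore.
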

\begin{proof}
Denote by $\mc{KP}^b$ the additive category of chain complexes with objects in $\mc{N}_H$, equipped with the termwise (split) exact structure. This is the exact structure $\mc{KP}^b$ inherits from the (termwise) $\mc{E}$-exact structure on $K^b(B)$ discussed above. Clearly, $\mc{KP}^b$ is an idempotent complete exact category.

Passing to the homotopy and derived categories, the category $\mc{KP}^b$ descends to $\mc{P}_\mc{E}\cong \mc{D}^b(\mc{N}_H,\mc{E})$, which embeds fully-faithfully inside $\mc{D}^b(B,\mc{E})$ (see \cite[Theorem 12.1 and Example 12.2]{KeDerCatUse}). The thickness of $\mc{P}_\mc{E}$ is then a consequence of \cite[Theorem 2.8]{BaSch} since the class of projective-injective objects $\mc{N}_H$ is idempotent complete.
\end{proof}

\paragraph{A Rickard type construction.}
Lemma \ref{lem-thick-PH} allows us to form the \emph{Verdier quotient} $\mc{D}^b(B,\mc{E})/\mc{P}_{\mc{E}}$ (see, for instance, \cite[Section 1]{Ri} and the references therein). 

\begin{thm}\label{thm-Rickard}
There is an equivalence of triangulated categories
\begin{equation}
    \mc{R}: \mc{C}(B,H) \lra \mc{D}^b(B,\mc{E})/\mc{P}_{\mc{E}} .
\end{equation}
\end{thm}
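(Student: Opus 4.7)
The plan is to construct the functor $\mc{R}$ explicitly on stalk complexes and establish in turn that it is triangulated, essentially surjective, and fully faithful, following the strategy of Orlov's proof for Frobenius exact categories in \cite{Orlov}.

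I would define $\mc{R}(M) := M$ (viewed as a stalk complex concentrated in degree zero) on objects, and send a morphism $f \in \Hom_{B\dmod}(M,N)$ to its class in $\mc{D}^b(B,\mc{E})/\mc{P}_{\mc{E}}$. This descends to $\mc{C}(B,H)$: if $f$ factors through some $X \in \mc{N}_H$, then $X$ is a perfect complex (being a stalk of an $\mc{E}$-projective-injective object), so $f$ becomes zero in the Verdier quotient. To verify $\mc{R}$ is triangulated, I would invoke Theorem \ref{thm-ses-to-dt}: every distinguished triangle in $\mc{C}(B,H)$ is isomorphic to one arising from a conflation in $\mc{E}$, which is in particular a termwise conflation of stalk complexes, and hence induces a distinguished triangle in $\mc{D}^b(B,\mc{E})$. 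The shift functors match because the canonical conflation $M \to M \otimes H \to M[1]$ yields a distinguished triangle in $\mc{D}^b(B,\mc{E})$ with middle term perfect, giving $\mc{R}(M[1]) \cong \mc{R}(M)[1]_{\mc{D}}$ in the quotient.

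For essential surjectivity, I would argue by induction on the amplitude $b - a$ of the support of a bounded complex $M^\bullet$. For the inductive step, I would use the canonical conflation $0 \to M^a \to M^a \otimes H \to M^a[1] \to 0$ together with a pushout construction to build a chain map from $M^\bullet$ to a complex supported in $[a+1, b]$ whose mapping cone is a two-term complex built from $M^a \otimes H$, and hence perfect. This yields an isomorphism in $\mc{D}^b(B,\mc{E})/\mc{P}_{\mc{E}}$, and iteration reduces any bounded complex to a stalk complex.

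The main obstacle is full faithfulness, which I would establish via the calculus of roofs for the Verdier localization. A morphism in the quotient from $M$ to $N$ is represented by a roof $M \xleftarrow{s} X^\bullet \xrightarrow{t} N$ with $\mathrm{cone}(s) \in \mc{P}_{\mc{E}}$. Applying the essential-surjectivity reduction functorially, I may assume $X^\bullet$ is a stalk complex $X$, in which case the perfect-cone condition forces $s : X \to M$ to fit into a conflation $0 \to K \to X \to M \to 0$ with $K \in \mc{N}_H$. This makes $s$ invertible in $\mc{C}(B,H)$, so the roof produces an honest morphism $t \circ s^{-1} : M \to N$ in $\mc{C}(B,H)$. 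Dually, if an honest $f : M \to N$ becomes zero in the quotient, then $f$ factors through some $P^\bullet \in \mc{P}_{\mc{E}}$, and the same reduction replaces $P^\bullet$ by a stalk of an object in $\mc{N}_H$, exhibiting $f$ as null-homotopic. The delicate points are coordinating the amplitude reduction with a given roof so that the perfect-cone condition is preserved throughout, and verifying that the resulting morphism $t \circ s^{-1}$ in $\mc{C}(B,H)$ is independent of the chosen roof representative.
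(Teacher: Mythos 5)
Your overall architecture matches the paper's: define $\mc{R}$ on stalk complexes, check it is triangulated, essentially surjective, and fully faithful, following Orlov. The triangulated-functor step is essentially identical to the paper's Lemma \ref{lem-Rickard-functor-exact}. For essential surjectivity you take a genuinely different (and standard) route: induction on amplitude via injective envelopes, versus the paper's replacement by a bounded-above $\mc{E}$-projective resolution followed by brutal truncation (Lemma \ref{lem-Rickard-functor-surjective}). Your route works, but not quite as you describe it: there is no chain map from $M^\bullet$ to a complex supported in $[a+1,b]$ with two-term perfect cone (the obvious candidate fails to commute with the differential in degree $a$). The correct two-step version first forms the pushout of $d^a$ along $\lambda_{M^a}\colon M^a\to M^a\otimes H$ to replace $M^\bullet$ by an isomorphic complex $Y^\bullet$ whose degree-$a$ term lies in $\mc{N}_H$, and then uses the stupid-truncation triangle $\sigma_{\geq a+1}Y^\bullet \to Y^\bullet \to (M^a\otimes H)[-a]$, whose third term is perfect. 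With that fix the induction closes.

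The genuine gap is in full faithfulness. First, the assertion that once the apex of the roof is a stalk complex $X$, the perfect-cone condition ``forces $s\colon X\to M$ to fit into a conflation $0\to K\to X\to M\to 0$ with $K\in\mc{N}_H$'' is false: a morphism of modules whose cone is perfect need not be a deflation at all (take $s=0\colon 0\to M$ with $M\in\mc{N}_H$), so you cannot read off $s^{-1}$ in $\mc{C}(B,H)$ this way. Second, and more seriously, the two points you defer as ``delicate'' --- that the apex of a roof can be replaced by a stalk \emph{inside} $\mc{D}^b(B,\mc{E})$ (the amplitude reduction only gives an isomorphism in the quotient, so one must compose honest maps with perfect cones and invoke the octahedron), and that the resulting morphism is independent of the representative --- are exactly where all the work lies; faithfulness in particular does not follow from ``the same reduction,'' since a bounded complex of objects of $\mc{N}_H$ is generally not isomorphic in $\mc{D}^b(B,\mc{E})$ to a stalk. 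The missing tool is the vanishing $\Ext^i_{\mc{D}}(M,P)=0$ for $i\geq 1$ and $P\in\mc{N}_H$ (the paper's Lemma \ref{lem-ext-vanishing}): this is what lets one truncate a projective resolution of $N$, lift a coroof to an honest morphism $M\to N$, and show that a morphism vanishing in the quotient factors through an $\mc{E}$-projective-injective module. Without this lemma, or a substitute for it, your argument for Lemma \ref{lem-Rickard-functor-faithful} does not close.
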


With the results of the previous section, the theorem is then a direct consequence of the main result of \cite{KeVo}. However, we record another direct proof following \cite{Orlov}.

\begin{lem}\label{lem-ext-vanishing}
Let $M$ be any $B\dmod$ and $P$ be a module in $\mc{N}_H$. Then, for any $i\geq 1$,
\[
\mathrm{Ext}^i_{\mc{D}}(M,P)=0.
\]
\end{lem}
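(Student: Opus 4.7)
The plan is to exploit the $\mc{E}$-injectivity of $P$, established in the preceding section where the class $\mc{N}_H$ was identified with the $\mc{E}$-projective-injective objects, together with the standard principle that Ext into an injective object vanishes in positive degrees. Concretely, I would identify $\Ext^i_\mc{D}(M, P)$ with the Yoneda extension group $\mathrm{YExt}^i_\mc{E}(M, P)$ in the exact category $(B\dmod, \mc{E})$. This identification is a formal consequence of the construction of $\mc{D}^b(B,\mc{E})$ as the Verdier quotient of the homotopy category by $\mc{E}$-acyclic complexes, worked out in \cite[Section 4]{KeDerCatUse} and \cite{NeeDerivedExact}.

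Under this identification, an element of $\mathrm{YExt}^i_\mc{E}(M, P)$ is represented by an exact sequence
\[
\xi: \quad 0 \lra P \lra E_1 \lra E_2 \lra \cdots \lra E_i \lra M \lra 0
\]
that is a splice of $i$ conflations in $\mc{E}$. For $i = 1$, the single conflation $0 \to P \to E \to M \to 0$ splits since $P$ is $\mc{E}$-injective, so its class is trivial. For $i \geq 2$, the first conflation $0 \to P \to E_1 \to K \to 0$ in the splice splits by $\mc{E}$-injectivity of $P$, so $E_1 \cong P \oplus K$; a standard Yoneda manipulation then replaces $\xi$ by a sequence beginning with a split conflation, exhibiting its class as zero.

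The main obstacle I anticipate is verifying the derived/Yoneda Ext identification with full rigor in the exact-category setting, since this is classical over abelian categories but requires some care beyond; I would simply cite Keller's treatment for this step. An alternative route avoids the Yoneda detour entirely and argues directly with the roof calculus in the Verdier quotient: for any roof $M \xleftarrow{s} X^\bullet \xrightarrow{f} P[i]_\mc{D}$ with $s$ a quasi-isomorphism, one uses the $\mc{E}$-injectivity of $P$ to construct a null-homotopy for $f$, possibly after refining $s$ via the canonical inflation $\lambda_{(\cdot)}$ which supplies enough $\mc{E}$-injectives. This alternative bypasses the identification with Yoneda extensions at the cost of more explicit chain-level homotopy bookkeeping, but yields the same vanishing $\Ext^i_\mc{D}(M, P) = 0$ for $i \geq 1$.
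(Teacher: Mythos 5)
Your primary argument is correct but takes a genuinely different route from the paper's. The paper proves the vanishing by direct roof calculus in the Verdier quotient: a morphism $M\to P[i]_{\mc{D}}$ is represented by a roof whose top row is an $\mc{E}$-acyclic complex $Q^\bullet$ augmented over $M$; the chain map into $P$ (placed in degree $-i$) and the differential $d_{-i}$ both factor through the cokernel term $B^{-i+1}$ of the conflation through $Q^{-i}$, and pushing out the inflation $B^{-i+1}\to Q^{-i+1}$ along $B^{-i+1}\to P$ yields an inflation $P\to Z$ that splits by $\mc{E}$-injectivity of $P$, so $Z\cong P\oplus B^{-i+2}$; replacing the roof by the resulting equivalent one exhibits it as the zero roof. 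This is precisely your sketched ``alternative route,'' carried out explicitly at the chain level. Your main route instead identifies $\Ext^i_{\mc{D}}(M,P)$ with $\mathrm{YExt}^i_{\mc{E}}(M,P)$ and splits the first conflation of a splice; the reduction for $i\geq 2$ (the class is the Yoneda product of a trivial class in $\mathrm{YExt}^1(K,P)$ with a class in $\mathrm{YExt}^{i-1}(M,K)$) is sound. What your route buys is conceptual brevity and a statement-level argument; what it costs is exactly the external input the paper is at pains to avoid: the comparison of derived-category Homs with Yoneda extensions in an exact category is true but is the genuinely nontrivial step, and the paper's explicit pushout argument is in effect a self-contained proof of the one instance of that comparison needed here. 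Either way the vanishing follows, so there is no gap, only a difference in what is taken as known.
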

\begin{proof}
This is true essentially because $P$ is also $\mc{E}$-injective. For the sake of completeness, let us provide an explicit proof, which is modeled on the usual one for the derived category of an abelian category.

A morphism $M\lra P[i]$ in $\mc{D}^b(B,\mc{E})$ is represented by the ``roof'' diagram
\begin{equation}
    \begin{gathered}
     \xymatrix{
    \cdots \ar[r]& Q^{-i-1} \ar[d]\ar[r]^-{d_{-i-1}} & Q^{-i} \ar[r]^-{d_{-i}} \ar[d]^f & Q^{-i+1}\ar[r]^-{d_{-i+1}} \ar[d] & \cdots \ar[r]^{d_{-1}} & Q^0 \ar[r]^s & M \ar[r] & 0\\
     \cdots \ar[r] & 0 \ar[r] & P \ar[r] & 0 \ar[r] & \cdots & & &
     }
    \end{gathered}
\end{equation}
where the top row is acyclic. The commutativity of the diagram implies that we have a conflation
\begin{equation}
    0 \lra B^{-i} \lra Q^{-i} \lra B^{-i+1} \lra 0,
\end{equation}
and $f$ and $d_{-i}$ factor through $B^{-i+1}$ to induce
\begin{equation}
    f^\prime : B^{-i+1} \lra P ,\quad \quad d^\prime: B^{-i+1} \lra Q^{-i+1}.
\end{equation}
Form the pushout along $f^\prime$ and $d^\prime$
\begin{equation}\label{eqn-pushQP}
    \begin{gathered}
    \xymatrix{
0  \ar[r] &     B^{-i+1} \ar[r]^{d^\prime} \ar[d]^{f^\prime} & Q^{-i+1} \ar[d]^{f^{\prime \prime}} \ar[r] & B^{-i+2} \ar[r] \ar@{=}[d] & 0  \\
0 \ar[r] & P \ar[r]^{d^{\prime \prime}} & Z \ar[r] & B^{-i+2} \ar[r] & 0 
}
    \end{gathered} \  .
\end{equation}
Since $P\in \mc{N}_H$ is $\mc{E}$-injective and $d^\prime$ is an inflation, $d^{\prime \prime}$ is also an inflation and must split. Thus we have
\begin{equation}
    Z \cong P \oplus B^{-i+2}.
\end{equation}
Replacing $Q^\bullet \stackrel{s}{\lra} M$ by the equivalent ``roof''
\begin{equation}
\begin{gathered}
     \xymatrix{
    \cdots \ar[r]& 0 \ar[d]\ar[r] & P \ar[r] \ar@{=}[d] & P \oplus B^{-i+2} \ar[r]^-{d_{-i+1}} \ar[d] & \cdots \ar[r]^{d_{-1}} & Q^0 \ar[r]^s & M \ar[r] & 0\\
     \cdots \ar[r] & 0 \ar[r] & P \ar[r] & 0 \ar[r] & \cdots & & &
     }
    \end{gathered} \ ,
    \end{equation}
we see that it is equivalent to the zero roof 
\begin{equation}
\begin{gathered}
     \xymatrix{
    \cdots \ar[r]& 0 \ar[d]\ar[r] & 0 \ar[r] \ar[d] &  B^{-i+2} \ar[r]^-{d_{-i+1}} \ar[d] & \cdots \ar[r]^{d_{-1}} & Q^0 \ar[r]^s & M \ar[r] & 0\\
     \cdots \ar[r] & 0 \ar[r] & P \ar[r] & 0 \ar[r] & \cdots & & &
     }
    \end{gathered} \ .
    \end{equation}
The result follows.    
\end{proof}

Consider the additive functor
\begin{equation}
    \mc{R}^\prime: B\dmod \lra \mc{D}^b(B,\mc{E})/\mc{P}_{\mc{E}},
\end{equation}
which sends a $B$-module $M$ to the object in the quotient category represented by the complex whose only nonzero term is equal to $M$ sitting in cohomological degree zero. By Lemma \ref{lem-zero-obj-criterion}, this functor factors through $\mc{N}_H$ to give us a functor
\begin{equation} \label{eqn-Rickard-functor}
    \mc{R}: \mc{C}(B,H)\lra \mc{D}^b(B,\mc{E})/\mc{P}_{\mc{E}},
\end{equation}
which we will refer to as the \emph{Rickard functor}.

\begin{lem}\label{lem-Rickard-functor-exact}
The Rickard functor is exact.
\end{lem}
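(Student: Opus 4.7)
The plan is to verify the two defining properties of a triangulated functor: (a) that $\mc{R}$ commutes with the shift functors up to natural isomorphism, and (b) that $\mc{R}$ sends standard distinguished triangles in $\mc{C}(B,H)$ to distinguished triangles in $\mc{D}^b(B,\mc{E})/\mc{P}_{\mc{E}}$. Both properties will rest on the observation that the shift $M[1]=\mathrm{Coker}(\lambda_M)$ and the triangles from Theorem \ref{thm-ses-to-dt} are built from conflations in $\mc{E}$, combined with the fact that objects of $\mc{N}_H$ vanish in the Verdier quotient.

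First, I would construct the natural shift isomorphism. The canonical sequence $0 \to M \xrightarrow{\lambda_M} M\otimes H \to M[1] \to 0$ is a conflation in $\mc{E}$, as established at the end of Section \ref{sec-Frobenius-exact}, and therefore yields a distinguished triangle in $\mc{D}^b(B,\mc{E})$. Since $M\otimes H\in\mc{N}_H$ becomes zero in the Verdier quotient, the connecting morphism descends to an isomorphism $\eta_M\colon \mc{R}(M[1]) \xrightarrow{\sim} \mc{R}(M)[1]_{\mc{D}}$. Naturality in $M$ follows from the functoriality of $M\mapsto \lambda_M$ together with the functoriality of the connecting homomorphism in the derived category of an exact category.

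Next, I would show that $\mc{R}$ carries any standard distinguished triangle in $\mc{C}(B,H)$ to a distinguished triangle. By Theorem \ref{thm-ses-to-dt}, such a triangle arises from a conflation $0 \to M \xrightarrow{f} N \xrightarrow{g} L \to 0$ in $\mc{E}$. Viewed as a conflation of complexes concentrated in degree zero, this same sequence produces a distinguished triangle $M \to N \to L \xrightarrow{\delta} M[1]_{\mc{D}}$ in $\mc{D}^b(B,\mc{E})$, and hence in the quotient. To identify this with the image of the $\mc{C}(B,H)$-triangle $M\xrightarrow{f}N\xrightarrow{g}L\xrightarrow{h}M[1]$, I would use the pushout object $C_f$ from diagram \eqref{eqn-commutative-diagram-in-CBH}, which fits into two conflations in $\mc{E}$: the pushout sequence $0\to N \to C_f \to M[1]\to 0$, and the conflation $0\to M\otimes H\to C_f\to L\to 0$ obtained by pushing the original sequence out along $\lambda_M$. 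The latter gives $\mc{R}(C_f)\cong \mc{R}(L)$ in the quotient, while the former supplies a distinguished triangle $N\to C_f\to M[1]\to N[1]_{\mc{D}}$ there; an appropriate rotation followed by application of $\eta_M$ then produces the required distinguished triangle.

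The hard part will be checking that the connecting morphism $h$ constructed in the proof of Theorem \ref{thm-ses-to-dt} agrees, under the natural isomorphism $\eta_M$, with the derived-category connecting morphism $\delta$. I expect to settle this by a direct diagram chase through \eqref{eqn-combined-diagram}: choosing a roof representing $\delta$ in the Verdier quotient that factors through $C_f$ via the two conflations above, the maps $h_1$, $h_2$ and the splitting $s$ already produced in that proof are precisely what is needed to exhibit the equivalence of the two connecting morphisms. Combined with the shift isomorphism from the first step, this shows that $\mc{R}$ is exact.
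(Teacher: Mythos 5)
Your proposal is correct and follows the same basic strategy as the paper: both steps rest on the fact that the relevant short exact sequences are conflations in $\mc{E}$, hence give distinguished triangles in $\mc{D}^b(B,\mc{E})$, and that the $\mc{N}_H$-terms die in the Verdier quotient. The first step (the shift isomorphism via $0\to M\xrightarrow{\lambda_M} M\otimes H\to M[1]\to 0$) is identical to the paper's.

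For the second step your routing differs slightly, in a way worth noting. You first invoke Theorem \ref{thm-ses-to-dt} to replace a standard triangle by one arising from a conflation $0\to M\to N\to L\to 0$, and you are then obliged to match the connecting morphism $h$ constructed there with the derived-category connecting morphism $\delta$ --- the step you rightly flag as ``the hard part.'' The paper avoids this detour: for an arbitrary $B$-module map $f\colon M\to N$ it uses the cone presentation \eqref{eqn-Cf-in-ses}, namely the conflation $0\to M\to M\otimes H\oplus N\to C_f\to 0$ extracted from \eqref{eqn-commutative-diagram-in-CBH}. This is a conflation by the converse half of Theorem \ref{thm-ses-to-dt}, it yields a distinguished triangle in $\mc{D}^b(B,\mc{E})$ whose three terms and two outer maps visibly become $M\xrightarrow{f}N\xrightarrow{g}C_f$ once $M\otimes H$ is killed in the quotient, and the third object is literally the cone $C_f$ of the standard triangle \eqref{eqn-triangle-in-CBH}, so the identification of connecting maps is essentially built in. Your plan is workable --- the diagram chase through \eqref{eqn-combined-diagram} you describe does close the gap --- but the paper's choice of conflation makes that bookkeeping largely unnecessary, and also handles arbitrary $f$ directly rather than only inflations up to isomorphism of triangles.
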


\begin{proof}
To show that $\mc{R}$ is an exact functor, let us check that it commutes with the respective shift functors and sends distinguished triangles to distinguished triangles.
To differentiate, let us denote by $[1]_{\mc{C}}$ the homological shift on
$\mc{C}(B,H)$ while using $[1]_{\mc{D}}$ to stand for the homological shift on $\mc{D}^b(B,\mc{E})$ and its quotient
$\mc{D}^b(B,\mc{E})/\mc{P}_{\mc{E}}$.

The conflation sequence
\begin{equation}
    0 \lra M \stackrel{\lambda_M}{ \lra }  M\otimes H \lra \mathrm{Coker}(\lambda_M) \lra 0
\end{equation}
leads to a distinguisehd triangle in $\mc{D}^b(B,\mc{E})$
\begin{equation}
 M \stackrel{\lambda_M}{ \lra } M \otimes H\lra \mathrm{Coker}(\lambda_M) \stackrel{[1]_{\mc{D}}}{\lra} M[1]_{\mc{D}} .
\end{equation}
Since $ M  \otimes H \in \mc{P}_{\mc{E}}$, we have $\mathrm{Coker}(\lambda_M) \cong M{[1]_{\mc{D}}}$ in $\mc{D}^b(B)/\mc{P}_{\mc{E}}$. Thus we have proven
\begin{equation}
    \mc{R}(M[1]_{\mc{C}})=\mc{R}(\mathrm{Coker}(\lambda_M))\cong M[1]_{\mc{D}}.
\end{equation}
Next, given a $B$-module map $f:M\lra N$ there is a short exact sequence of $B$-modules, by equation \eqref{eqn-commutative-diagram-in-CBH},
\begin{equation}\label{eqn-Cf-in-ses}
    0 \lra M \xrightarrow{(\lambda_M, f)} M\otimes H  \oplus N \xrightarrow{\psi+g} C_f \lra 0
\end{equation}
which leads to a distinguised triangle in $\mc{D}^b(B,{\mc{E}})$. Since $ M \otimes H$ descends to zero in the quotient category, we have the resulting distinguished triangle in $\mc{D}^b(B,{\mc{E}})/\mc{P}_{\mc{E}}$
\begin{equation}
    M \stackrel{f}{\lra} N \stackrel{g}{\lra} C_f \stackrel{[1]_{\mc{D}}}{\lra} M[1]_{\mc{D}}.
\end{equation}
This is image of the standard distinguished triangle \eqref{eqn-triangle-in-CBH} under $\mc{R}$. This finishes the proof that $\mc{R}$ is exact.
\end{proof}

\begin{lem}\label{lem-Rickard-functor-surjective}
The Rickard functor is essentially surjective.
\end{lem}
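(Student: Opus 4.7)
The plan is to prove essential surjectivity by induction on the length $\ell = b - a + 1$ of a bounded complex $M^\bullet$ representing a given object of $\mc{D}^b(B,\mc{E})/\mc{P}_{\mc{E}}$, supported in degrees $[a,b]$. For the base case $\ell = 1$, a stalk complex $M$ in degree $n$ equals $M[-n]_{\mc{D}}$ in $\mc{D}^b(B,\mc{E})$; iterating Lemma \ref{lem-Rickard-functor-exact} (applied to $M[-1]_{\mc{C}}$ to handle the inverse direction) shows this is isomorphic to $\mc{R}(M[-n]_{\mc{C}})$ in the quotient.

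For the inductive step with $\ell \geq 2$, I would construct an explicit quasi-isomorphism $f^\bullet \colon M^\bullet \to \tilde M^\bullet$ in $\mc{D}^b(B,\mc{E})$ whose effect is to replace the leftmost term $M^a$ by the $\mc{E}$-projective-injective $M^a \otimes H$. Form the pushout
\begin{equation*}
\begin{gathered}
\xymatrix{
M^a \ar[r]^{d^a} \ar[d]_{\lambda_{M^a}} & M^{a+1} \ar[d]^{\iota} \\
M^a \otimes H \ar[r]_{\kappa} & N^{a+1}
}
\end{gathered}
\end{equation*}
and set $\tilde M^a := M^a \otimes H$, $\tilde M^{a+1} := N^{a+1}$, and $\tilde M^k := M^k$ for $k \notin \{a, a+1\}$; the differential $\tilde M^a \to \tilde M^{a+1}$ is $\kappa$, the differential $\tilde M^{a+1} \to \tilde M^{a+2}$ is the unique map out of the pushout determined by $d^{a+1}$ and the zero map $M^a \otimes H \to M^{a+2}$ (which agree on $M^a$ because $d^{a+1} d^a = 0$), and the remaining differentials are those of $M^\bullet$. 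The chain map $f^\bullet$ is defined to be $\lambda_{M^a}$ in degree $a$, $\iota$ in degree $a+1$, and the identity elsewhere. Since $\lambda_{M^a}$ is an inflation in $\mc{E}$ and pushouts of inflations are inflations, $f^\bullet$ is a termwise inflation fitting into a termwise conflation $0 \to M^\bullet \to \tilde M^\bullet \to Q^\bullet \to 0$ of complexes. A short diagram chase in the pushout square should identify $Q^\bullet$ with the two-term complex $M^a[1]_{\mc{C}} \xrightarrow{\id} M^a[1]_{\mc{C}}$ concentrated in degrees $a, a+1$, which is contractible and hence $\mc{E}$-acyclic. Consequently $f^\bullet$ is a quasi-isomorphism, so $M^\bullet \cong \tilde M^\bullet$ in the Verdier quotient.

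To finish the induction, observe that the stupid truncation triangle
\begin{equation*}
(M^a \otimes H)[-a]_{\mc{D}} \to \tilde M^\bullet \to \sigma^{>a} \tilde M^\bullet \to (M^a \otimes H)[-a+1]_{\mc{D}}
\end{equation*}
in $\mc{D}^b(B,\mc{E})$ has outer terms in $\mc{P}_{\mc{E}}$, giving $\tilde M^\bullet \cong \sigma^{>a} \tilde M^\bullet$ in the Verdier quotient. Since $\sigma^{>a} \tilde M^\bullet$ is supported in degrees $[a+1, b]$, i.e., has length $\ell - 1$, the inductive hypothesis yields $\sigma^{>a} \tilde M^\bullet \cong \mc{R}(L)$ for some $L \in \mc{C}(B,H)$, and combining gives $M^\bullet \cong \mc{R}(L)$. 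I expect the main point requiring care to be the identification of $Q^\bullet$ and of the induced differential $d_Q^a = \id$; this reduces to a bookkeeping exercise, using the universal property of the pushout to compute the induced map on the cokernels of the two columns, and confirming that the quotient $N^{a+1}/\iota(M^{a+1})$ is canonically identified with $M^a \otimes H / \lambda_{M^a}(M^a) = M^a[1]_{\mc{C}}$.
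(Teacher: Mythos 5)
Your proof is correct, but it takes a genuinely different route from the paper's. The paper argues globally: it replaces the given object by a bounded-above complex $Q^\bullet$ of $\mc{E}$-projectives (using that $(B\dmod,\mc{E})$ has enough projectives), notes that discarding the terms in degrees above some $r$ changes the complex only by a perfect complex, and identifies the remaining resolution with the stalk complex $\mathrm{Coker}(d_{r-1})$ suitably shifted. You instead induct on the length of a bounded representative, using the pushout along $\lambda_{M^a}$ to replace the lowest term by the $\mc{E}$-projective-injective $M^a\otimes H$ at the cost of a contractible (hence $\mc{E}$-acyclic) cokernel complex, and then strip that term off modulo $\mc{P}_{\mc{E}}$. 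Your identification of $Q^\bullet$ is right: the cokernel of the pushout of an inflation is canonically the cokernel of the inflation itself, and the induced map on cokernels is exactly that canonical isomorphism, so $Q^\bullet$ is $\id\colon\mathrm{Coker}(\lambda_{M^a})\to\mathrm{Coker}(\lambda_{M^a})$. The one bookkeeping slip is the direction of your truncation triangle: for cochain complexes the part of $\tilde M^\bullet$ in degrees $>a$ is a subcomplex and the degree-$a$ stalk is the quotient, so the triangle reads $\sigma^{>a}\tilde M^\bullet\to\tilde M^\bullet\to (M^a\otimes H)[-a]_{\mc{D}}\to\sigma^{>a}\tilde M^\bullet[1]_{\mc{D}}$; this does not affect the conclusion that $\tilde M^\bullet\cong\sigma^{>a}\tilde M^\bullet$ in the Verdier quotient. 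What your approach buys: it needs no projective resolutions and stays entirely within bounded complexes, avoiding the paper's unbounded-below resolution and its acyclicity-in-low-degrees argument. What the paper's approach buys: it is shorter and exhibits the preimage in one step as a single shifted module.
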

\begin{proof}
We need to show that any object in $\mc{D}^b(B,\mc{E})/\mc{P}_{\mc{E}}$ is isomorphic to an object in the image of $\mc{R}$. By taking an $\mc{E}$-projective resolution, an object $X$ of $\mc{D}^b(B,\mc{E})$  is isomorphic to a bounded-from-above complex of $\mc{E}$-projective $B$-modules
\[
Q^\bullet = \left(
\cdots \lra Q^{r-1}\stackrel{d_{r-1}}{\lra} Q^r \stackrel{d_{r}}{\lra} Q^{r+1} \stackrel{d_{r+1}}{\lra} \cdots \lra Q^s \lra 0 \lra 
\right)
\]
for which $Q^i=0$ if $i>s$ and $Q^\bullet$ is acyclic in degrees less than $  r$. Thus the natural map from $Q^\bullet$ onto the ``stupid'' truncation
\begin{equation}
\sigma_{\leq r}(Q^\bullet):=\left(
\cdots \lra Q^{r-1} \stackrel{d_{r-1}}{\lra} Q^r \lra 0 \lra 0
\right)
\end{equation}
has its cokernel an $\mc{E}$-perfect complex, and thus
is an isomorphism in $\mc{D}^b(B,\mc{E})/\mc{P}_{\mc{E}}$ by Corollary \ref{cor-proj-B-mod-vanish}.
Let $M=\mathrm{Coker}(d_{r-1})$. Then $\sigma_{\leq r}(Q^\bullet) \cong M[r]_{\mc{D}}$ in $\mc{D}^b(B,\mc{E})$. Since $M$ is in the essential image of $\mc{R}$, then so is $M[r]_{\mc{D}}$ since $\mc{R}$ commutes with homological shifts (Lemma \ref{lem-Rickard-functor-exact}). 
\end{proof}

\begin{lem} \label{lem-Rickard-functor-faithful}
The Rickard functor is fully-faithful.
\end{lem}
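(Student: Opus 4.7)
The plan is to establish faithfulness and fullness of $\mc{R}$ separately, exploiting the fact that every object of $\mc{N}_H$ is simultaneously $\mc{E}$-projective and $\mc{E}$-injective. Throughout I will use the standard identifications $\Hom_{\mc{D}^b(B,\mc{E})}(M,N)=\Hom_{B\dmod}(M,N)$ for $M,N\in B\dmod$, and $\Hom_{\mc{D}^b(B,\mc{E})}(-,P^\bullet)=\Hom_{\mc{K}^b(B)}(-,P^\bullet)$ (respectively with source) whenever $P^\bullet$ is a bounded complex of $\mc{N}_H$-objects.

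For faithfulness, I would start from $f\in\Hom_{B\dmod}(M,N)$ with $\mc{R}(f)=0$. Thickness of $\mc{P}_\mc{E}$ (Lemma~\ref{lem-thick-PH}) then produces a factorization $f=\beta\circ\alpha$ in $\mc{D}^b(B,\mc{E})$ through some bounded complex $P^\bullet$ of $\mc{N}_H$-objects. Lifting $\alpha\colon M\to P^\bullet$ and $\beta\colon P^\bullet\to N$ to genuine chain maps, the only contributing components are $\alpha^0\colon M\to P^0$ and $\beta^0\colon P^0\to N$ because $M$ and $N$ are concentrated in cohomological degree zero. Their composition $\beta^0\alpha^0$ agrees with $f$ in $\Hom_{B\dmod}(M,N)$, exhibiting $f$ as factoring through $P^0\in\mc{N}_H$, so $f=0$ in $\mc{C}(B,H)$.

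For fullness, represent $\phi\in\Hom_{\mc{D}^b(B,\mc{E})/\mc{P}_\mc{E}}(M,N)$ by a roof $M\xleftarrow{s}X^\bullet\xrightarrow{g}N$ with $C^\bullet=\mathrm{cone}(s)\in\mc{P}_\mc{E}$. The induced map $p\colon M\to C^\bullet$ is a chain map concentrated in a single component $p^0\colon M\to C^0$, so it factors as $p=\iota\circ p'$ through the non-negative stupid-truncation inclusion $\iota\colon\sigma^{\geq 0}C^\bullet\hookrightarrow C^\bullet$. Setting $Y=\mathrm{fib}(p')$, the octahedral axiom applied to the composition $M\xrightarrow{p'}\sigma^{\geq 0}C^\bullet\xrightarrow{\iota}C^\bullet$ yields a distinguished triangle
\[
Y\to X^\bullet\to \sigma^{<0}C^\bullet[-1]\to Y[1]
\]
whose third term lies in $\mc{P}_\mc{E}$. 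Consequently $Y\to X^\bullet$ is invertible in $\mc{D}^b(B,\mc{E})/\mc{P}_\mc{E}$, and pre-composing produces an equivalent roof $M\xleftarrow{}Y\xrightarrow{g'}N$ whose cone is now $\sigma^{\geq 0}C^\bullet$, concentrated in non-negative degrees.

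The payoff of the truncation is that $\Ext^1_{\mc{D}^b(B,\mc{E})}(\sigma^{\geq 0}C^\bullet,N)$ vanishes: any representing chain map $\sigma^{\geq 0}C^\bullet\to N[1]$ would require a nonzero component in degree $-1$ of the source, which is zero by construction. The long exact $\Hom_{\mc{D}^b(B,\mc{E})}(-,N)$-sequence attached to the triangle $Y\to M\to\sigma^{\geq 0}C^\bullet\to Y[1]$ then lifts $g'$ to some $h\in\Hom_{B\dmod}(M,N)$, rendering the refined roof equivalent to the trivial roof $(\Id_M,h)$ and giving $\phi=\mc{R}(h)$. The main difficulty I anticipate lies in the octahedral bookkeeping of the previous paragraph—correctly identifying the third vertex of the triangle as $\sigma^{<0}C^\bullet[-1]$ and verifying that pre-composition by $Y\to X^\bullet$ genuinely represents the same class $\phi$—after which the Ext-vanishing closes the argument cleanly.
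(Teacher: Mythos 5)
Your faithfulness argument is correct and is, in fact, more direct than the paper's: the paper obtains faithfulness as a byproduct of its coroof analysis and Lemma \ref{lem-ext-vanishing}, whereas you use the standard fact that a morphism killed by a Verdier quotient factors in $\mc{D}^b(B,\mc{E})$ through an object of $\mc{P}_\mc{E}$, and then the identification of $\Hom_{\mc{D}^b(B,\mc{E})}$ into (resp.\ out of) a bounded complex of $\mc{E}$-injectives (resp.\ $\mc{E}$-projectives) with $\Hom_{\mc{K}^b(B)}$ to realize the factorization by honest chain maps landing in $P^0\in\mc{N}_H$. Those identifications are standard for exact categories, and this half goes through.

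The fullness half has a genuine gap. You apply the brutal truncation to $C^\bullet=\mathrm{cone}(s)$ and rely on (i) $\sigma^{<0}C^\bullet[-1]\in\mc{P}_\mc{E}$ and (ii) $\Ext^1_{\mc{D}}(\sigma^{\geq 0}C^\bullet,N)=0$ because ``any representing chain map would need a component in degree $-1$.'' Both steps silently assume that $C^\bullet$ is \emph{literally} a bounded complex of objects of $\mc{N}_H$. But $C^\bullet\in\mc{P}_\mc{E}$ only means $C^\bullet$ is isomorphic in $\mc{D}^b(B,\mc{E})$ to such a complex; its actual terms are $X^{k+1}\oplus M^k$, which have no reason to be $\mc{E}$-projective-injective, and brutal truncation is not invariant under isomorphism in the derived category. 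For (i): if $C^\bullet$ is a strictly $\mc{E}$-acyclic complex with terms outside $\mc{N}_H$, it lies in $\mc{P}_\mc{E}$ (being zero there), yet $\sigma^{<0}C^\bullet$ is isomorphic to a shift of a module that need not lie in $\mc{N}_H$; so the invertibility of $Y\to X^\bullet$ in the quotient is unjustified. For (ii): morphisms in $\mc{D}^b(B,\mc{E})$ out of an arbitrary complex are roofs, not chain maps, and $\Hom_{\mc{D}}(A^\bullet,N[1])$ can be nonzero even when $A^\bullet$ is concentrated in degrees $\geq 0$ (take $A^\bullet=M$ a module: this is an $\Ext^1$ group). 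The degree count is valid only when the source is a bounded complex of $\mc{E}$-projectives, which is exactly the situation Lemma \ref{lem-ext-vanishing} addresses. The repair is to first replace $C^\bullet$ by an isomorphic honest complex of $\mc{N}_H$-objects, represent $M\to C^\bullet$ by a genuine chain map (possible precisely because the target is then a bounded complex of $\mc{E}$-injectives), and truncate that complex; this is, in dual form, what the paper does by taking an $\mc{E}$-projective resolution of $N$ and truncating it.
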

\begin{proof}
We follow the proof of \cite[Proposition 1.11]{Orlov}.
By Lemma \ref{lem-Rickard-functor-surjective}, it suffices to show that, for any $B$-modules $M$ and $N$, we have an isomorphism 
\begin{equation}
\mc{R}: \Hom_{\mc{C}(B,H)}(M,N)\cong \Hom_{\mc{D}^b(B,\mc{E})/\mc{P}_{\mc{E}}}(\mc{R}(M), \mc{R}(N)).
\end{equation}
By abuse of notation, we will write $M$, $N$ for $\mc{R}(M)$, $\mc{R}(N)$. 

Given a morphism $ M\lra N $ in $\mc{D}^b(B,\mc{E})/\mc{P}_{\mc{E}}$, it is represented by a ``coroof'' in $\mc{D}^b(B,\mc{E})$
\begin{equation}\label{eqn-coroof-1}
\begin{gathered}
 \xymatrix{M  \ar[dr]_g && N\ar[dl]^{t} \\
 & Q^\bullet  &\\
}
\end{gathered}
\ .
\end{equation}
Here the cone of $t$, $C_t^\bullet$, is an object of $\mc{P}_\mc{E}$ by definition of Verdier localization.

Let $P^\bullet$ be an $\mc{E}$-projective resolution of $N$:
\begin{equation}
P^\bullet = \left(
\cdots \stackrel{d_{-2}}{\lra}  P^{-1}\stackrel{d_{-1}}{\lra} P^0 \stackrel{d_{0}}{\lra} P^{1} \stackrel{d_{1}}{\lra} \cdots \lra P^s \lra 0 \lra 
\right).
\end{equation}
For any $k\in \N$, define the ``stupid'' truncation of $P^\bullet$ as
\begin{equation}
\sigma_{\geq -k}P^\bullet = \left(
0 \lra P^{-k } \stackrel{d_{-k}}{\lra}  \cdots \stackrel{d_{-2}}{\lra}  P^{-1}\stackrel{d_{-1}}{\lra} P^0 \stackrel{d_{0}}{\lra} P^{1} \stackrel{d_{1}}{\lra} \cdots \lra P^s \lra 0 \lra 
\right).
\end{equation}
Set $L=\mH^{-k}(\sigma_{\geq -k}P^\bullet)$. We then have a distinguished triangle in $\mc{D}^b(B,\mc{E})$:
\begin{equation}\label{eqn-dt-L-sigma-N}
    L[k] \lra \sigma_{\geq -k}P^\bullet \lra N \stackrel{s}{\lra} L[k+1].
\end{equation}
Form the distinguished triangle
\begin{equation}
  C_t^\bullet [-1] \lra   N \stackrel{t}{\lra} Q^\bullet \lra C_t^\bullet.
\end{equation}
Since $C_t^\bullet $ is a perfect complex, we can find $k$ sufficiently large so that 
\begin{equation}
   \Hom_{\mc{D}}(C_t, L[k])=\Hom_{\mc{D}}(C_t, L[k+1])=0,
\end{equation}
Thus $s: N\lra L[k+1]$ factors through as $s=s_1t$ for some
\begin{equation}
    s_1:   Q^\bullet \lra L[k+1],
\end{equation}
This in turn induces a factorization of the coroof \eqref{eqn-coroof-1} through an equivalent coroof (solid part)
\begin{equation}\label{eqn-coroof-2}
\begin{gathered}
 \xymatrix{M \ar@{-->}[rr]^f  \ar[ddr]_g \ar[dr]^{g_1} && N\ar[ddl]^{t} \ar[dl]_{s} \\
 & L[k+1] &\\
 & Q^\bullet \ar[u]^{s_1}  &\\
}
\end{gathered}
\ ,
\end{equation}
where $g_1:=s_1g$. Applying $\Hom_{\mc{D}^b(B,\mc{E})}(M,\mbox{-}\ )$ to the rotated distinguished triangle \eqref{eqn-dt-L-sigma-N}, and using Lemma \ref{lem-ext-vanishing}, we obtain a surjection
\begin{equation}
  \Hom_{\mc{D}}(M,N) \stackrel{s_*}{\lra}  \Hom_{\mc{D}}(M,L[k+1]) 
  \lra \Ext_{\mc{D}}^1(M,\sigma_{\geq -k} P^\bullet)=0.
\end{equation}
Thus there is necessarily an $f: M \lra N$ (dashed arrow in \eqref{eqn-coroof-2}) such that
$sf=g_1$, which is equivalent to the coroofs \eqref{eqn-coroof-1}, \eqref{eqn-coroof-2}.

If $f=0$ in $\mc{D}^b(B,\mc{E})/\mc{P}_{\mc{E}}$, then we may have chosen $g=0$ with out loss of generality. Then $g_1=s_1g=0$, and $g_1$ factors through a morphism $M\lra \sigma_{\geq - k} P^\bullet$. Again, by Lemma \ref{lem-ext-vanishing}, this morphism necessarily factors through $P^0$, which is ${\mc{E}}$-projective-injective. This finishes the proof of the lemma.
\end{proof}

Now Theorem \ref{thm-Rickard} follows by combining Lemmas \ref{lem-Rickard-functor-exact}, \ref{lem-Rickard-functor-surjective} and \ref{lem-Rickard-functor-faithful}. 

\begin{rem}
In the speical case when $B=H$, Theorem \ref{thm-Rickard} reduces to the special case of Rickard's theorem applied to the Frobenius category $H\dmod$ with the usual abelian exact structure:
\begin{equation}
    H\udmod \cong \mc{D}^b(H)/\mc{P}.
\end{equation}
Here, the conflations in $\mc{E}$ agrees with the usual exact sequences in the abelian category $H\dmod$ (Example \ref{eg-H-mod-conflations}). Note that, this category $\mc{D}^b(H)/\mc{P}$ acts on $\mc{D}^b(B,\mc{E})/\mc{P_\mc{E}}$ by the usual tensor product of chain complexes. Since the Rickard equivalence is set up by sending any object of $H\udmod$ to the one-term complex in $\mc{D}^b(H)/\mc{P}$, the action of these tensor triangulated categories on both sides of Theorem \ref{thm-Rickard} commutes with the Rickard equivalence (c.f.~Proposition \ref{prop-H-udmod-action}).
\end{rem}

\begin{cor}
If $B$ has a finite $H$-equivariant $(B,B)$-bimodule resolution, then $\mc{C}(B,H)=0$.
\end{cor}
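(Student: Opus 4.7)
My plan is to leverage the equivalence of Theorem \ref{thm-Rickard}, which reduces the vanishing $\mc{C}(B,H) = 0$ to showing that every $B$-module $M$ lies in the thick subcategory $\mc{P}_\mc{E}$ of $\mc{D}^b(B,\mc{E})$. Concretely, I need to exhibit each $M$ as isomorphic in $\mc{D}^b(B,\mc{E})$ to a bounded complex whose terms all lie in $\mc{N}_H$, after which passage to the Verdier quotient annihilates $M$.

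The tactic is a standard ``tensor-and-descend'' argument. The hypothesis furnishes a finite $H$-equivariant resolution of the diagonal bimodule
$$
0 \lra P_n \lra \cdots \lra P_1 \lra P_0 \lra B \lra 0,
$$
where each $P_i$ is a ``free'' $H$-equivariant $(B,B)$-bimodule, i.e., a summand of $B\otimes V_i\otimes B$ for some $V_i\in H\dmod$, and where the sequence is equivariantly split after restriction, so that base change preserves its exactness. Applying $(-)\otimes_B M$ to this resolution produces a finite complex of left $B$-modules
$$
0 \lra P_n\otimes_B M \lra \cdots \lra P_0\otimes_B M \lra M \lra 0,
$$
whose terms are isomorphic to $B\otimes V_i\otimes M$. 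Using Lemma \ref{lem-H-tensor-M} together with the identification of $\mc{N}_H$ with the $\mc{E}$-projective-injectives (Lemma \ref{lem-exact-structure} and the subsequent discussion), each such term lies in $\mc{N}_H$. The equivariant splitting of the original resolution guarantees that the tensored sequence is not merely exact but remains split upon applying $-\otimes H$, so that each piece is a conflation in $\mc{E}$. This realizes $M$ as an object of $\mc{P}_\mc{E}$, whence $M\cong 0$ in $\mc{D}^b(B,\mc{E})/\mc{P}_\mc{E}$, and hence in $\mc{C}(B,H)$ by Theorem \ref{thm-Rickard}.

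The main obstacle I foresee lies in pinning down the precise meaning of ``finite $H$-equivariant $(B,B)$-bimodule resolution'' so that the two properties needed above come for free: (i) tensoring with $M$ on the right lands every term inside $\mc{N}_H$, and (ii) the resolution, while exact in the equivariant bimodule category, yields a sequence of conflations in $\mc{E}$ after base change along $M$. Once the correct formulation is in place—free generators of the form $B\otimes V\otimes B$ with $V\in H\dmod$, together with an $H$-equivariant splitting—both (i) and (ii) reduce to the tensor identities and exactness properties already collected in Section \ref{sec-Frobenius-exact}, and the conclusion drops out of Theorem \ref{thm-Rickard}.
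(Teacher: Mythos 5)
Your proposal is correct and follows essentially the same route as the paper: tensor the finite $H$-equivariant bimodule resolution with $M$ to exhibit $M$ as isomorphic in $\mc{D}^b(B,\mc{E})$ to a perfect complex, then conclude via Theorem \ref{thm-Rickard}. The paper states this in two sentences; you have simply unpacked the details (including the genuine point that the equivariance must guarantee the tensored sequence consists of $\mc{E}$-conflations, which the paper leaves implicit).
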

\begin{proof}
If $B$ has such a bimodule resolution, then any $M\in B\dmod$ is quasi-isomorphic to a perfect complex.  The result follows from the previous theorem.
\end{proof}

The last result generalizes \cite[Section 1, Example d]{Hopforoots}, as well as the fact that the stable category of $H$ is trivial when $H$ is semisimple (recall that $H$ is either semisimple or of infinite homological dimension since it is Frobenius).

\addcontentsline{toc}{section}{References}


\bibliographystyle{alpha}
\bibliography{qy-bib}

\begin{thebibliography}{Kho16}

\bibitem[BS01]{BaSch}
P.~Balmer and M.~Schlichting.
\newblock Idempotent completion of triangulated categories.
\newblock {\em J. Algebra}, 236(2):819--834, 2001.
\newblock Available at
  \href{https://homepages.warwick.ac.uk/~masiap/research/idemp2.pdf}{https://homepages.warwick.ac.uk/~masiap/research/idemp2.pdf}.

\bibitem[Hap88]{Hap88}
D.~Happel.
\newblock {\em Triangulated categories in the representation}, volume 119 of
  {\em London Math. Soc. Lect. Note Ser.}
\newblock Cambridge University Press, Cambridge, 1988.

\bibitem[Kel94]{Ke1}
B.~Keller.
\newblock Deriving {DG} categories.
\newblock {\em Ann. Sci. \'Ecole Norm. Sup. (4)}, 27(1):63--102, 1994.

\bibitem[Kel96]{KeDerCatUse}
B.~Keller.
\newblock Derived categories and their uses.
\newblock In {\em Handbook of algebra, {V}ol. 1}, volume~1 of {\em Handb.
  Algebr.}, pages 671--701. Elsevier/North-Holland, Amsterdam, 1996.
\newblock Available at
  \href{https://www.maths.ed.ac.uk/~v1ranick/papers/keller.pdf}{https://www.maths.ed.ac.uk/~v1ranick/papers/keller.pdf}.

\bibitem[Kho16]{Hopforoots}
M.~Khovanov.
\newblock {H}opfological algebra and categorification at a root of unity: {T}he
  first steps.
\newblock {\em J. Knot Theory Ramifications}, 25(3):359--426, 2016.
\newblock
  \href{http://front.math.ucdavis.edu/math.QA/0509083}{arXiv:math/0509083}.

\bibitem[KV87]{KeVo}
B.~Keller and D.~Vossieck.
\newblock Sous les cat\'{e}gories d\'{e}riv\'{e}es.
\newblock {\em C. R. Acad. Sci. Paris S\'{e}r. I Math.}, 305(6):225--228, 1987.

\bibitem[LS69]{LaSw}
R.~G. Larson and M.~E. Sweedler.
\newblock An associative orthogonal bilinear form for {H}opf algebras.
\newblock {\em Amer. J. Math.}, 91:75--94, 1969.

\bibitem[Nee90]{NeeDerivedExact}
A.~Neeman.
\newblock The derived category of an exact category.
\newblock {\em J. Algebra}, 135(2):388--394, 1990.
\newblock Available at
  \href{https://core.ac.uk/download/pdf/82189868.pdf}{https://core.ac.uk/download/pdf/82189868.pdf}.

\bibitem[Orl09]{Orlov}
D.~Orlov.
\newblock Derived categories of coherent sheaves and triangulated categories of
  singularities.
\newblock In {\em Algebra, arithmetic, and geometry: in honor of {Y}u. {I}.
  {M}anin. {V}ol. {II}}, volume 270 of {\em Progr. Math.}, pages 503--531.
  Birkh\"auser Boston, Inc., Boston, MA, 2009.
\newblock \href{https://arxiv.org/abs/math/0503632}{arXiv:math/0503632}.

\bibitem[OT20]{OhTa}
M.~Ohara and D.~Tamaki.
\newblock Cotorsion pairs in hopfological algebra.
\newblock 2020.
\newblock \href{https://arxiv.org/abs/2012.07159}{arXiv:2012.07159}.

\bibitem[Qi14]{QYHopf}
Y.~Qi.
\newblock Hopfological algebra.
\newblock {\em Compos. Math.}, 150(01):1--45, 2014.
\newblock \href{http://arxiv.org/abs/1205.1814}{arXiv:1205.1814}.

\bibitem[QS16]{QiSussan}
Y.~Qi and J.~Sussan.
\newblock A categorification of the {B}urau representation at prime roots of
  unity.
\newblock {\em Selecta Math. (N.S.)}, 22(3):1157--1193, 2016.
\newblock \href{http://arxiv.org/abs/1312.7692}{arXiv:1312.7692}.

\bibitem[Ric89]{Ri}
J.~Rickard.
\newblock Derived categories and stable equivalence.
\newblock {\em J. Pure Appl. Algebra}, 61(3):303--317, 1989.

\end{thebibliography}

%

\vspace{0.1in}

\noindent Y.~Q.: { \sl \small Department of Mathematics, University of Virginia, Charlottesville, VA 22904, United States} \newline \noindent {\tt \small email: yq2dw@virginia.edu}

%
\end{document}